\newtheorem{theorem}{Theorem}[section]
\newtheorem{lemma}[theorem]{Lemma}
\newtheorem{proposition}[theorem]{Proposition}
\newtheorem{corollary}[theorem]{Corollary}
\theoremstyle{definition}
\newtheorem{definition}[theorem]{Definition}
\newtheorem{example}[theorem]{Example}
\theoremstyle{remark}
\newtheorem{remark}[theorem]{Remark}
\numberwithin{equation}{section}
\newcommand{\field}[1]{\mathbb{#1}}
\newcommand{\R}{\field{R}}
\newcommand{\N}{\field{N}}
\newcommand{\Lip}{\mathop{\rm{Lip}}}
\newcommand{\de}{\delta}
\newcommand{\ep}{\varepsilon}
\newcommand{\la}{\lambda}
\newcommand{\si}{\sigma}
\newcommand{\Om}{\Omega}
\newcommand{\BUC}{\text{\rm{BUC}}}
\newcommand{\mnen}{}
\begin{document}

	\title[Convex monotone semigroups on lattices of continuous functions]{Convex monotone semigroups on lattices of continuous functions}
	
	\author{Robert Denk}
	\address{Department of Mathematics and Statistics, University of Konstanz, Germany}
	\email{robert.denk@uni-konstanz.de}
	
	\author{Michael Kupper}
	\address{Department of Mathematics and Statistics, University of Konstanz, Germany}
	\email{kupper@uni-konstanz.de}
	
	\author{Max Nendel}
	\address{Center for Mathematical Economics, Bielefeld University, Germany}
	\email{max.nendel@uni-bielefeld.de}

	\date{\today}

	\thanks{Financial support through the German Research Foundation via CRC 1283 is gratefully acknowledged by the third author.\ We thank Daniel Bartl, Jonas Blessing, Liming Yin and Jos\'e Miguel Zapata Garc\'ia  for helpful discussions and comments.}
	
	\subjclass[2010]{}

	\begin{abstract}
	We consider convex monotone $C_0$-semigroups on a Banach lattice, which is assumed to be a Riesz subspace of a $\sigma$-Dedekind complete Banach lattice. 
	Typical examples include the space of all bounded uniformly continuous functions and the space of all continuous functions vanishing at infinity. 
	We show that the domain of the classical generator of a convex semigroup is typically not invariant. Therefore, we propose alternative versions for the domain, 
	such as the monotone domain and the Lipschitz set, for which we prove invariance under the semigroup. As a main result, we obtain the uniqueness of the semigroup in terms of an extended version of the generator. 
	The results are illustrated with several examples related to Hamilton-Jacobi-Bellman equations, including nonlinear versions of the shift semigroup and the heat equation.
	In particular, we determine their symmetric Lipschitz sets, which are invariant and allow to \mnen{understand} the generators in a weak sense.
	
	   \smallskip
       \noindent \emph{Key words:} Convex semigroup, nonlinear Cauchy problem, Lipschitz set, monotone generator, Hamilton-Jacobi-Bellman equation

       \smallskip
	\noindent \emph{AMS 2020 Subject Classification:} Primary 47H20; Secondary 35A02; 35F21 
\end{abstract}

	\maketitle
	
	\setcounter{tocdepth}{1}

\section{Introduction}

The topic of model uncertainty or ambiguity in the fields of Mathematical Economics and Mathematical Finance has been extensively studied in the past decades. Hereby, a particular focus has been put on parameter uncertainty of stochastic processes describing the evolution of an underling asset. Examples include a Brownian motion with drift uncertainty (cf.~Coquet et al.~\cite{MR1906435}) or volatility uncertainty (cf.\ Peng~\cite{PengG},\cite{MR2474349}), a Black-Scholes model with volatility uncertainty (cf.\ Avellaneda et al.\ \cite{doi:10.1080/13504869500000005}, Epstein and Ji \cite{Epji}, Vorbrink \cite{MR3250653}), and L\'evy processes with uncertainty in the L\'evy triplet (cf.\ Hu and Peng \cite{PengHu}, Neufeld and Nutz \cite{NutzNeuf}, Hollender \cite{H2016}, K\"uhn \cite{MR3941868}). 
In the case of a Brownian motion with uncertain volatility within an interval $[\underline \si, \overline \si]$ for $0\leq \underline \si\leq \overline \si$, 
this leads to the  equation
\begin{equation}\label{eq.G-heat}
  \partial_t u(t,x)=\sup_{\sigma\in [\underline \si, \overline \si]} \frac{\sigma^2}{2}\partial_{xx}u(t,x)\quad \text{for }t\geq 0\text{ and }x\in \R.
\end{equation}
The latter is referred to as the $G$-heat equation, and their solutions (for different initial values) can be represented by means of the so-called $G$-expectation, cf.\ Peng~\cite{PengG,MR2474349}.

Equation \eqref{eq.G-heat} falls into the class of Hamilton-Jacobi-Bellman (HJB) equations, which closely relate to (stochastic) optimal control problems. On a meta (and structurally very reduced) level, a control problem consists of a nonempty control set $\Lambda$, and a family of semigroups $(S_\lambda)_{\lambda\in \Lambda}$, where $S_\lambda$ relates to the value of a cost functional under the static control $\lambda\in \Lambda$. Allowing for a dynamic state-dependent choice 
from the control set $\Lambda$, leads to a convex semigroup $S$, which on an abstract level is given as a (viscosity) solution to an HJB equation of the form
 \begin{equation}\label{abstractHJB}
  \partial_t u =  \sup_{\lambda\in \Lambda} A_\lambda u,
 \end{equation}
 where $A_\lambda$ is the generator of the affine linear semigroup $S_\lambda$ for all $\lambda\in \Lambda$. Inspired by a construction of Nisio \cite{MR0451420}, such equations have been studied using a semigroup-theoretic framework for spaces of continuous functions by Denk et al.~\cite{dkn2} and Nendel and R\"ockner \cite{roecknen}. Choosing $A_\lambda:=\tfrac{\lambda^2}{2}\partial_{xx}$ for $\lambda\in \Lambda:=[\underline \si, \overline \si]$, the $G$-heat equation \eqref{eq.G-heat} is a particular instance of \eqref{abstractHJB}. The related control problem is the one of optimally choosing a volatility from the control set $[\underline \si, \overline \si]$. We refer to Denis et al.\ \cite{MR2754968} for a detailed illustration of this relation. In the context of optimal control theory, the uniqueness and regularity of solutions to Hamilton-Jacobi-Bellman equations are fundamental in order to come up with verification theorems; ensuring that the solution to the HJB equation is in fact the value function of an optimal control problem, cf.\ Fleming and Soner \cite{MR2179357}, Pham \cite{MR2533355}, and Yong and Zhou \cite{MR1696772}. In an even broader sense, the $G$-heat equation and HJB equations of the form \eqref{abstractHJB} 
 are examples for convex differential equations and the related value functions (their solutions) form a convex semigroup on suitable spaces of continuous functions, where the semigroup property is the abstract analogon of the dynamic programming principle. We refer to Denk et al.~\cite{dkn2} and Nendel and R\"ockner \cite{roecknen} for more details on this relation.

 One classical approach to treat nonlinear equations uses the theory of maximal monotone or m-accretive operators, cf.\ Barbu \cite{Barbu10}, B\'enilan and Crandall \cite{Benilan-Crandall91}, Br\'ezis \cite{Brezis71}, Evans \cite{Evans87}, Kato \cite{Kato67}, and the references therein. To show that an accretive operator is m-accretive, one has to prove that $1+h A$ is surjective for  $h>0$, and in many cases it is quite delicate to verify this condition, see Example \ref{1.5} below. Moreover, it is known that m-accretive operators lead to the existence of a mild solution, but the existence of strong solutions is only known under additional assumptions on the underlying Banach space, including reflexivity, see \cite[Section~4.1]{Barbu10}. In terms of nonlinear semigroups, this means that even if the initial value is smooth, the solution (the semigroup applied to the initial value) does not belong to the domain of the operator for positive time, so the domain of the operator is not invariant under the semigroup, see \cite[Section~4]{Crandall-Liggett71} or Example~\ref{4.3} below.
Therefore, in the context of HJB equations, one typically considers a more general solution concept, so-called viscosity solutions, cf.\ Crandall et al.\ \cite{Crandall-Ishii-Lions92}, Crandall and Lions \cite{Crandall-Lions83}, and the discussion in Evans \cite[Section~4]{Evans87}.


In this paper, we study  convex monotone semigroups on spaces of continuous functions and construct invariant domains with a particular interest in the regularity and  uniqueness of the solution. The main object and the starting point of our investigation is a convex $C_0$-semigroup $S=(S(t))_{t\geq 0}$ on a Banach lattice $X$ which is a Riesz subspace of some Dedekind $\si$-complete Riesz space $\overline{X}$. Typical examples for $X$ are the space $\BUC$ of all bounded uniformly continuous functions, the space $C_0$ of all continuous functions vanishing at infinity, or spaces of uniformly continuous functions with certain growth at infinity. We focus on monotone semigroups that are continuous from above, meaning that $S(t)x_n\downarrow 0$ for all $t\geq 0$, whenever $x_n\downarrow 0$. This additional continuity property allows to extend the semigroup to the set $X_\de$ of all $x\in \overline{X}$ for which there exists a sequence $(x_n)_{n\in\mathbb{N}}$ in $X$ such that $x_n\downarrow x$. Likewise, the generator $A$ of the semigroup extends to the so-called monotone generator $A_\delta$, \mnen{whose domain} is defined as the set of all $x\in X$ such that, for every sequence $(h_n)_{n\in\mathbb{N}}$ in $(0,\infty)$ with $h_n\downarrow 0$, there exists an approximating sequence $(y_n)_{n\in\mathbb{N}}$ in $X$ such that 
\[
\bigg\|\frac{S(h_n)x-x}{h_n}-y_n\bigg\|\to 0\quad \text{and}\quad y_n\downarrow y=:A_\delta x.
\]
The main results in Section~\ref{sec:semigroupmonotone} and Section~\ref{sec:uniqueness} state that a convex monotone $C_0$-semigroup leaves the domain $D(A_\delta)$ of its monotone generator invariant, and that the semigroup is uniquely determined by $A_\delta$ on $D(A_\delta)$. We also study even weaker forms of domains requiring only the local Lipschitz continuity of the map $t\mapsto S(t)x$, or, in other words, a weak Sobolev regularity of the map $t\mapsto S(t)x$, i.e., for every continuous linear functional $\mu$, the map $\big(t\mapsto \mu S(t)x\big)\in W_{\rm loc}^{1,\infty}\big([0,\infty)\big)$. These domains are shown to be invariant as well, and we discuss their relation to one another.

In Section~\ref{sec:examples}, we consider the
example of  the uncertain shift semigroup \mnen{on the space of $\BUC$ of all bounded uniformly continuous functions}, which corresponds to the fully nonlinear PDE
\begin{equation}\label{1-1}
\partial_t u(t,x)= |\partial_x u(t,x)|,\quad u(0,\cdot)=f.
\end{equation}
Here, the nonlinear generator is given by  $Au =|\partial_x u|$ \mnen{for sufficiently regular $u\in \BUC$}. In that case, it holds $\BUC^1\subset D(A_\delta)\subset W^{1,\infty}$ and $ W^{1,\infty}$ is invariant under the corresponding semigroup. Note that
\eqref{1-1} is a special case of the Hamilton-Jacobi PDE, where, under appropriate conditions on the nonlinearity, the
viscosity solution is given by the Hopf-Lax formula, see, e.g., \cite[Section~3.3]{Evans10}, \cite[Section~11.1]{Lions82}. Similarly, for the second-order differential operator $Au=\frac{1}{2}\max\{\underline{\sigma}^2\partial_{xx}u,\overline{\sigma}^2\partial_{xx}u\}$ with $0\le \underline{\sigma}\le \overline{\sigma}$, we derive that $W^{2,\infty}$ is invariant under the respective semigroup $S$, which corresponds to the $G$-heat equation.\ \mnen{Moreover, we show that the equality $\lim_{h\downarrow 0}\frac{S(h)u-u}{h}=\frac{1}{2}\max\{\underline{\sigma}^2\partial_{xx}u,\overline{\sigma}^2\partial_{xx}u\}$ holds on $W^{2,\infty}$ in a pointwise sense almost everywhere.}\ We remark that in the parabolic situation $\underline{\sigma}>0$
many results on the solvability of this second-order fully nonlinear equation in Sobolev and H\"older spaces were
obtained by Krylov, see \cite[Chapters 12 and 13]{Krylov18b}.

\section{Setup and notation}\label{sec:setup}
Throughout this article, we assume that $X$ is a real Banach lattice which is a Riesz subspace of a Dedekind $\sigma$-complete Riesz space $\overline X$. A typical example is the space $\BUC$ as a subspace of the space  $\mathcal L^\infty$ of all bounded measurable functions. We denote by $X'$ the dual space of $X$, i.e., the space of all continuous linear functionals $X\to \R$. For a sequence $(x_n)_{n\in\mathbb{N}}$ in $ X$, we write $x_n\downarrow x$ if $(x_n)_{n\in\mathbb{N}}$ is decreasing, bounded from below, and $x=\inf_n x_n\in \overline X$.
We define
\[
X_\delta:=\left\{x\in \overline X\colon x_n\downarrow x\mbox{ for some sequence }(x_n)_{n\in\mathbb{N}}\mbox{ in }X\right\}.
\]
Let $M$ be the space of all positive linear functionals $\mu\colon X\to \mathbb{R}$ which are continuous from above, i.e.,~$\mu x_n\downarrow 0$ for every sequence $(x_n)_{n\in\mathbb{N}}$ in $X$ such that $x_n\downarrow 0$. Every $\mu\in M$ has a unique extension $\mu\colon X_\delta\to\mathbb{R}$ which is continuous from above, i.e.,~$\mu x_n\downarrow \mu x$ for every sequence $(x_n)_{n\in\mathbb{N}}$ in $X_\delta$ such that $x_n\downarrow x\in X_\delta$, see e.g.~\cite[Lemma 3.9]{denk2018kolmogorov}. We assume that the set $M$ separates the points of $X_\delta$, i.e.,~for every $x,y\in X_\delta$ with $x\neq y$ there exists some $\mu\in M$ with $\mu x\neq \mu y$. For an operator $S\colon X\to X$, we define
\[
 \|S\|_r:=\sup_{x\in B(0,r)}\|S x\|
\]
for all $r>0$, where $B(x_0,r):=\{x\in X\colon \|x-x_0\|\leq r\}$ for $x_0\in X$. We say that an operator $S\colon X\to X$ is \textit{convex} if $S \big(\lambda x+(1-\lambda) y\big)\leq \lambda S x+ (1-\lambda) Sy$ for all $\lambda \in[0,1]$, \textit{positive homogeneous} if $S (\lambda x)= \lambda S x$ for all $\lambda>0$, \textit{sublinear} if $S$ is convex and positive homogeneous, \textit{monotone} if $x\le y$ implies $Sx\le Sy$ for all $x,y\in X$, and \textit{bounded} if $\|S\|_r<\infty$ for all $r>0$.
For $x\in X$, we define the convex operator $S_x\colon X\to X$ by
\[
S_x y:=S(x+y)-Sx.
\]
\begin{definition}
 A family $S=\big(S(t)\big)_{t\geq 0}$ of bounded operators $X\to X$ 
 is called a \emph{$C_0$-semigroup} on $X$ if 
 \begin{enumerate}
  \item[(S1)] $S(0)x=x$ for all $x\in X$,
  \item[(S2)] $S(t+s)x=S(t)S(s)x$ for all $x\in X$ and $s,t\in[0,\infty)$,
  \item[(S3)] $S(t)x\to x$ as $t\downarrow 0$ for all $x\in X$.
 \end{enumerate}
We say that  $S$ is \textit{monotone}, \textit{convex}, or \textit{sublinear} if $S(t)$ is monotone, convex, or sublinear for all $t\ge 0$, respectively.  
 \end{definition}
 We conclude with a notion of continuity, which we will require on several occasions.
 \begin{definition}
 	A monotone $C_0$-semigroup  $S$ is called \textit{continuous from above} if $S(t)x_n\downarrow S(t)0$ for all $t\in[0,\infty)$ and every sequence $(x_n)_{n\in\mathbb{N}}$ in $X$ with $x_n\downarrow 0$.
 \end{definition}

\section{Invariant domains}\label{sec:semigroupmonotone}
 In this section, we discuss the invariance of various notions of generators and domains. 
Throughout, let $S$ be a convex $C_0$-semigroup on $X$.
In contrast to \cite{dkn3}, where the Banach lattice $X$ is Dedekind $\si$-complete with order continuous norm, the domain
\[
D(A):=\bigg\{x\in X\colon \frac{S(h)x-x}{h}\text{ is convergent in $X$ for } h\downarrow 0\bigg\}
\]
is in general not invariant under the semigroup. For instance, for the uncertain semigroup $(S(t))_{t\in[0,\infty)}$ in Section \ref{sec:ushift}, there exists some $x\in D(A)$ such that $S(t)x\not\in D(A)$ for some $t\in(0,\infty)$. We therefore introduce the following modified versions of the domain.

\begin{definition}\label{def:dom2}
	The domain $D(A_\delta)$ of the \textit{monotone generator} $A_\delta$ of $S$ is defined  as  the set of all $x\in X$ such that, for every $(h_n)_{n\in\mathbb{N}}$ in $(0,\infty)$ with $h_n\downarrow 0$, there exists a sequence $(A_n x)_{n\in\mathbb{N}}$ in $X$ and some $y\in X_\delta$ such that
	\begin{equation}\label{domweak}
	\bigg\| \frac{S(h_n)x-x}{h_n}-A_n x \bigg\|\to 0\quad\mbox{and}\quad A_n x\downarrow y.
	\end{equation}
	We define the monotone generator $A_\delta\colon D(A_\delta)\subset X\to X_\delta$  by $A_
	\delta x:=y$ for $x\in D(A_\delta)$, where $y$ is the limit in \eqref{domweak}, which is uniquely determined by Lemma \ref{lemma:welldef}.
\end{definition}

\begin{definition}
	The \emph{Lipschitz set} of the semigroup $S$ is defined as
	\begin{equation}\label{dom:LIp}
	D_L:=\bigg\{x\in X\colon \sup_{h\in (0,h_0]}\bigg\|\frac{S(h)x-x}{h}\bigg\|<\infty\quad  \mbox{for some }h_0>0\bigg\}.
	\end{equation}
	We further define the \emph{symmetric Lipschitz set} of the semigroup $S$ by
	\[
	 D_L^s:=\big\{x\in X\colon x,-x\in D_L \big\}.
	\]
\end{definition}

Let $W^{1,\infty}_{\rm loc}\big([0,\infty)\big)$ denote the space of all functions $f\colon [0,\infty)\to \R$ in $L^\infty_{\rm loc}\big([0,\infty)\big)$ with weak derivative $f'\in L^\infty_{\rm loc}\big([0,\infty)\big)$. Recall that $ W^{1,\infty}_{\rm loc}\big([0,\infty)\big)$ coincides with space of all locally Lipschitz continuous functions. The following observation is one of the basic ingredients in the proof of Section \ref{sec:uniqueness}, below.
\begin{remark}
Let $x\in X$. Then, $x\in D_L$ if and only if $$\big(t\mapsto \mu S(t)x\big)\in W^{1,\infty}_{\rm loc}\big([0,\infty)\big)\quad \text{for all }\mu\in X'.$$ In fact, by Proposition \ref{domainlip}, the map $[0,\infty)\to X, \;t\mapsto \mu S(t)x$ is locally Lipschitz for every $x\in D_L$ and $\mu\in X'$, which proves one direction of the equivalence. Now, assume that $\big(t\mapsto \mu S(t)x\big)\in W^{1,\infty}_{\rm loc}\big([0,\infty)\big)$ for all $\mu\in X'$. Then, for every $\mu\in X'$,
 \[
  \sup_{h\in (0,1]} \bigg|\mu \bigg(\frac{S(h)x-x}{h}\bigg)\bigg|<\infty.
 \]
 By the Banach-Steinhaus theorem, it follows that $x\in D_L$. If $\sup_{t\geq 0}\|S(t)\|_r<\infty$ for all $r\geq 0$, as, for example, in Section \ref{sec:ushift} and Section \ref{sec:G}, we obtain that $x\in D_L$ if and only if $$\big(t\mapsto \mu S(t)x\big)\in W^{1,\infty}\big([0,\infty)\big)\quad \text{for all }\mu\in X'.$$
\end{remark}

We say that the norm $\|\cdot \|$ on $X$ is \textit{$\sigma$-order continuous} if $\lim_{n\to\infty}\|x_n\|=0$ for every decreasing sequence $(x_n)_{n\in\mathbb{N}}$ with $\inf_{n\in \N}x_n=0$. The prime example for a Banach lattice with $\sigma$-order continuous norm is the closure $C_0$ w.r.t.\ supremum norm $\|\cdot \|_\infty$ of the space $C_c$ of all continuous functions $\Om \to \R$ with compact support, where $\Om$ is a locally compact metric space. Moreover, we say that the norm $\|\cdot \|$ on $X$ is \textit{order continuous} if, for every net $(x_\alpha)_\alpha$ with $x_\alpha \downarrow 0$, we have $\|x_\alpha\|\to 0$. Notice that order continuity of the norm is, for example, implied by separability of $X$ together with \textit{Dedekind $\si$-completeness} of $X$, 
cf.\ \cite[Exercise 2.4.1]{MR1128093} or \cite[Corollary to Theorem II.5.14]{MR0423039}. Typical examples for Banach lattices with order continuous norm are the spaces $L^p(\mu)$ for $p\in [1,\infty)$ and an arbitrary measure $\mu$, the space $c_0$ of all sequences vanishing at infinity, and Orlicz spaces. We would like to point out that, due to its strong implications, we avoid order continuity of the norm in the present paper.\ A detailed study of convex semigroups on Banach lattices with order continuous norm can be found in \cite{dkn3}.

We have the following relations between the domains and generators.
\begin{lemma}\label{lem:domain}
	One has $D(A)\subset D(A_\delta)\subset D_L$, and $A_\delta|_{D(A)}=A$. If the norm $\|\cdot \|$ on $X$ is $\sigma$-order continuous, then $x\in D(A_\delta)$ with $A_\delta x\in X$ implies $x\in D(A)$ and $A_\delta x=Ax$. 
	If  $X$  is $\sigma$-Dedekind complete with  $\sigma$-order continuous norm, then $A_\delta=A$.
\end{lemma}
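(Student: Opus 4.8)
The plan is to prove the assertions in the order: (i) $D(A)\subseteq D(A_\delta)$ with $A_\delta|_{D(A)}=A$ and $D(A_\delta)\subseteq D_L$; (ii) the $\sigma$-order-continuous statement; and then to deduce the last statement formally. For the deduction: if $X$ is $\sigma$-Dedekind complete, every decreasing sequence in $X$ that is bounded below has its infimum in $X$, so $X_\delta=X$; hence, under the additional $\sigma$-order-continuity hypothesis, $A_\delta x\in X_\delta=X$ for every $x\in D(A_\delta)$, so (ii) gives $x\in D(A)$ and $A_\delta x=Ax$, and together with (i) this yields $D(A)=D(A_\delta)$ and $A=A_\delta$.

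For (i), fix $x\in D(A)$ and an arbitrary $(h_n)$ with $h_n\downarrow 0$, and put $w_n:=\frac{S(h_n)x-x}{h_n}-Ax$, so $\|w_n\|\to 0$. The heart of the matter is to produce a \emph{decreasing} sequence in $X$ that approximates $\frac{S(h_n)x-x}{h_n}$ in norm and whose infimum lies in $X_\delta$. I would choose scalars $c_n\in(0,1]$ with $c_n\|w_n\|\le 2^{-n}$ and set $b_n:=\sum_{m\ge n}c_m|w_m|$. Since $X$ is a Banach lattice this series converges in $X$, $b_n\in X^+$, $b_n-b_{n+1}=c_n|w_n|\ge 0$, and $\|b_n\|\le 2^{1-n}\to 0$. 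Then $A_n x:=Ax+b_n\in X$ defines a decreasing sequence with $\big\|\frac{S(h_n)x-x}{h_n}-A_n x\big\|=\|w_n-b_n\|\le\|w_n\|+\|b_n\|\to 0$, so it only remains to show $\inf_n b_n=0$. For this, let $\mu\in M$; then $\mu$ is a positive, hence norm-continuous, functional, so $0\le\mu b_n\le\|\mu\|\,\|b_n\|\to 0$, while $b_n\downarrow\inf_n b_n\in X_\delta$ and $\mu$ is continuous from above on $X_\delta$, so $\mu(\inf_n b_n)=\lim_n\mu b_n=0$; since $M$ separates the points of $X_\delta$ and $0\in X_\delta$, this forces $\inf_n b_n=0$. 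Hence $A_n x\downarrow Ax\in X\subseteq X_\delta$, which shows $x\in D(A_\delta)$, and by the well-definedness of $A_\delta$ (Lemma~\ref{lemma:welldef}) we get $A_\delta x=Ax$.

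Still within (i), for $D(A_\delta)\subseteq D_L$: if $x\in D(A_\delta)$ failed to lie in $D_L$, then one could select $h_n\downarrow 0$ with $\big\|\frac{S(h_n)x-x}{h_n}\big\|\to\infty$; applying Definition~\ref{def:dom2} to that particular sequence produces $A_n x\in X$ and $y\in X_\delta$ with $\big\|\frac{S(h_n)x-x}{h_n}-A_n x\big\|\to 0$ and $A_n x\downarrow y$, so $(A_n x)_n$ is order bounded, hence norm bounded (order intervals being norm bounded), and then so is $\big(\frac{S(h_n)x-x}{h_n}\big)_n$ — a contradiction. For (ii): assume the norm is $\sigma$-order continuous, $x\in D(A_\delta)$, and $y:=A_\delta x\in X$. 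Given any $h_n\downarrow 0$, Definition~\ref{def:dom2} together with Lemma~\ref{lemma:welldef} yields $A_n x\in X$ with $\big\|\frac{S(h_n)x-x}{h_n}-A_n x\big\|\to 0$ and $A_n x\downarrow y$; since $y\in X$, the sequence $A_n x-y\in X$ decreases to $0$, so $\sigma$-order continuity gives $\|A_n x-y\|\to 0$, hence $\big\|\frac{S(h_n)x-x}{h_n}-y\big\|\to 0$. As $(h_n)$ was arbitrary, $\frac{S(h)x-x}{h}\to y$ as $h\downarrow 0$, i.e.\ $x\in D(A)$ and $Ax=y=A_\delta x$.

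I expect the main obstacle to be the construction in (i): one must realize that the substantive task is to build a \emph{decreasing} element of $X$, that the approximation of $\frac{S(h_n)x-x}{h_n}$ then comes for free once $\|b_n\|\to 0$, and — crucially — that the assumption ``$M$ separates the points of $X_\delta$'' is exactly the hook that converts the norm statement $\|b_n\|\to 0$ into the order statement $b_n\downarrow 0$. The remaining ingredients (norm boundedness of order intervals, $\sigma$-order continuity, and $X_\delta=X$ under $\sigma$-Dedekind completeness) are routine.
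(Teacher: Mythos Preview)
Your proof is correct, but for the inclusion $D(A)\subset D(A_\delta)$ you work much harder than the paper does. The paper simply takes the \emph{constant} sequence $A_n x:=Ax$; since a constant sequence is trivially decreasing and bounded below with infimum $Ax\in X\subset X_\delta$, one has $A_n x\downarrow Ax$ and $\big\|\tfrac{S(h_n)x-x}{h_n}-A_n x\big\|\to 0$ immediately. Your construction of a strictly decreasing sequence $Ax+b_n$ via the tail sums $b_n=\sum_{m\ge n}c_m|w_m|$, and the subsequent appeal to ``$M$ separates points of $X_\delta$'' to identify $\inf_n b_n=0$, is correct but unnecessary --- the definition of $x_n\downarrow x$ does not require strict decrease. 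The remaining parts of your argument (the contradiction for $D(A_\delta)\subset D_L$ via norm-boundedness of order intervals, the $\sigma$-order-continuous step, and the final reduction using $X_\delta=X$ under $\sigma$-Dedekind completeness) match the paper's proof essentially verbatim; if anything, you are slightly more explicit than the paper in justifying that the sequence $(A_n x)_n$ is norm bounded.
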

\begin{proof}
	We first assume that $x\in D(A)$. Then, for every  $h_n\downarrow 0$ and $A_n x:= A x$ for all $n\in\mathbb{N}$, one has
	\[
	\bigg\| \frac{S(h_n)x-x}{h_n}-A_n x \bigg\|\to 0,
	\]
	which shows that $x\in D(A_\delta)$ with $A_\delta x=A x$.
	
	We next assume that $x\in D(A_\delta)$.
	Then, there exists some $h_0>0$ such that
	\[
	\sup_{h\in(0,h_0]}\bigg\|\frac{S(h)x-x}{h}\bigg\|<\infty.
	\]
	Otherwise, there exists a sequence $h_n\downarrow 0$ such that $\big\|\tfrac{S(h_n)x-x}{h_n}\big\|\ge n$ for all $n$. Since $x\in D(A_\delta)$ there exists a bounded decreasing sequence $(A_n x)_{n\in\mathbb{N}}$ in $X$ such that $A_n x\downarrow A_\delta x$ and
	\[\bigg\| \frac{S(h_n)x-x}{h_n}-A_n x \bigg\|\to 0.\] But then,
	\[\sup_{n\in\mathbb{N}}\bigg\|\frac{S(h_n)x-x}{h_n}\bigg\|\le\sup_{n\in\mathbb{N}} \bigg\|\frac{S(h_n)x-x}{h_n}-A_n x\bigg\|+\sup_{n\in\mathbb{N}}\|A_n x\|<\infty,
	\]
	which is a contradiction. This shows that $x\in D_L$. 
	
	If the norm $\|\cdot \|$ on $X$ is $\sigma$-order continuous and $x\in D(A_\delta)$ with $A_\delta x\in X$, then $\|A_n x- A_\delta x\|\to 0$, so that $\tfrac{S(h_n)x-x}{h_n}\to A_\delta x$. If, in addition, $X$ is $\sigma$-Dedekind complete, then $A_\delta x\in X$ for all $x\in D(A_\delta)$, which shows that $A_\delta=A$.
\end{proof}	
For every $x\in X$ and $y\in X_\delta$, the directional derivative is defined as
\[
S^\prime_+(t,x)y=\inf_{h>0} \frac{S(t)(x+h  y)-S(t)x}{h }\in X_\delta.
\]
For further details on the directional derivative we refer to  Appendix \ref{append:direcder}.
The following main result of this subsection provides invariance for $D_L$ and $D(A_\delta)$, and states
regularity properties in the time variable.

\begin{theorem}\label{generatormain2}
	For every $x\in D_L$, one has
	\begin{itemize}
		\item[(i)] $S(t)x\in D_L$ for all $t\in[0,\infty)$,
		\item[(ii)] for every $\mu\in M$ there is a locally bounded measurable function $f_\mu\colon [0,\infty)\to \R$ with $\mu S(t)x=\mu x+\int_0^t f_\mu(s)\,ds$ for all $x\in D(A_\delta)$ and $t\ge 0$.
	\end{itemize}
	For every $x\in D(A)$, it holds
	\begin{itemize}
		\item[(iii)] $S(t)x\in D(A_\delta)$ for all $t\ge 0$ with $A_\delta S(t)x=S^\prime_+(t,x)A_\delta x$,
		
		\item[(iv)] $\mu S(t)x=\mu x+\int_0^t \mu S^\prime_+(s,x)A_\delta x\,ds$ for every $\mu\in M$ and all $t\ge 0$. In particular, $f_\mu(s)=\mu S^\prime_+(s,x)A_\delta x$ for almost every $s\in[0,\infty)$.
	\end{itemize}
	Moreover, \emph{(iii)} and \emph{(iv)} hold for all $x\in D(A_\delta)$ if, in addition, the semigroup is monotone and  continuous from above.
\end{theorem}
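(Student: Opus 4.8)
The plan is to establish (i)--(iv) for the base domains $D_L$ and $D(A)$ and then bootstrap (iii), (iv) to $D(A_\delta)$ under the additional hypotheses — the only genuinely hard step. Two elementary facts are used throughout: a convex bounded operator on $X$ is locally Lipschitz; and for a convex operator $T$, a direction $y$, and $0<h'\le h$ one has $\frac{T(x+h'y)-Tx}{h'}\le\frac{T(x+hy)-Tx}{h}$ as well as $\frac{T(x+hy)-Tx}{h}\ge Tx-T(x-y)$ (write $x$ as the appropriate convex combination of $x+h'y,x+hy$, resp.\ of $x+hy,x-y$).

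For (i), (S2) gives $\frac{S(h)S(t)x-S(t)x}{h}=\frac{S(t)S(h)x-S(t)x}{h}$; for $x\in D_L$ the points $S(h)x$ with $h\in(0,h_0]$ lie in a fixed ball around $x$, so local Lipschitz continuity of $S(t)$ yields $\bigl\|\frac{S(t)S(h)x-S(t)x}{h}\bigr\|\le L\bigl\|\frac{S(h)x-x}{h}\bigr\|$, which is bounded, i.e.\ $S(t)x\in D_L$. For (ii), Proposition~\ref{domainlip} shows $t\mapsto\mu S(t)x$ is locally Lipschitz, hence locally absolutely continuous, so it equals $\mu x+\int_0^t f_\mu(s)\,ds$ with $f_\mu$ its a.e.\ derivative, a locally bounded measurable function.

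For (iii) and (iv) with $x\in D(A)$: fix $h_n\downarrow0$, put $y_n:=\frac{S(h_n)x-x}{h_n}\to Ax$ in $X$ and $A_n x:=\frac{S(t)(x+h_nAx)-S(t)x}{h_n}\in X$. Then $\bigl\|\frac{S(h_n)S(t)x-S(t)x}{h_n}-A_n x\bigr\|=\frac1{h_n}\|S(t)(x+h_ny_n)-S(t)(x+h_nAx)\|\le L\|y_n-Ax\|\to0$; convexity makes $(A_n x)_n$ decreasing with infimum $S^\prime_+(t,x)Ax$, and $A_n x\ge S(t)x-S(t)(x-Ax)\in X$, so $A_n x\downarrow S^\prime_+(t,x)Ax\in X_\delta$. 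Hence $S(t)x\in D(A_\delta)$ with $A_\delta S(t)x=S^\prime_+(t,x)Ax=S^\prime_+(t,x)A_\delta x$ (Lemma~\ref{lem:domain}), which is (iii). For (iv) let $g(t):=\mu S(t)x$; the computation above together with norm-continuity of the positive functional $\mu$ and its continuity from above on $X_\delta$ shows $g^\prime_+(t)=\mu S^\prime_+(t,x)A_\delta x$ for every $t$; since $x\in D_L$, $g$ is locally Lipschitz, so $g(t)=g(0)+\int_0^t g'(s)\,ds$ with $g'=g^\prime_+$ a.e., giving (iv) (measurability of $s\mapsto\mu S^\prime_+(s,x)A_\delta x$ follows on writing it as a pointwise infimum over $h\in\{1/k:k\in\N\}$ of continuous functions of $s$).

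Finally, the bootstrap to $x\in D(A_\delta)$ under monotonicity and continuity from above. Then $S(t)$ extends to $X_\delta$, on which $y\mapsto S^\prime_+(t,x)y$ is monotone and continuous from above (swap the two infima in its definition and use continuity from above of the extension), and $S^\prime_+(t,x)A_\delta x\in X_\delta$ is well defined (Appendix~\ref{append:direcder}). Fix $h_n\downarrow0$ and an approximating sequence $A_n x\in X$ with $A_n x\downarrow A_\delta x$ and $\bigl\|\frac{S(h_n)x-x}{h_n}-A_n x\bigr\|\to0$, and set $C_n:=\frac{S(t)(x+h_nA_n x)-S(t)x}{h_n}\in X$; as before, $\bigl\|\frac{S(h_n)S(t)x-S(t)x}{h_n}-C_n\bigr\|\to0$, since $\|A_n x\|$ is bounded ($x\in D_L$). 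The ordering is the heart of the matter: for $n\ge m$, $A_n x\le A_m x$ together with monotonicity of $S(t)$ and convexity in the fixed direction $A_m x$ shows $(C_n)_n$ is decreasing and $C_n\le\frac{S(t)(x+h_nA_m x)-S(t)x}{h_n}$, so $\inf_n C_n\le S^\prime_+(t,x)A_m x$ for every $m$; while $A_n x\ge A_\delta x$ and monotonicity give $C_n\ge\frac{S(t)(x+h_nA_\delta x)-S(t)x}{h_n}\ge S^\prime_+(t,x)A_\delta x$, so $(C_n)_n$ is also order bounded below and thus $C_n\downarrow z:=\inf_n C_n\in X_\delta$. Hence $S^\prime_+(t,x)A_\delta x\le z\le\inf_m S^\prime_+(t,x)A_m x=S^\prime_+(t,x)A_\delta x$, the last equality by continuity from above of $S^\prime_+(t,x)(\cdot)$ along $A_m x\downarrow A_\delta x$; so $z=S^\prime_+(t,x)A_\delta x$, proving (iii), and (iv) then follows verbatim from the $D(A)$ argument with $A_\delta x$ in place of $Ax$. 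I expect the main obstacle to be exactly this ordering step: producing the monotone sequence $(C_n)_n$ inside $X$ with order limit $S^\prime_+(t,x)A_\delta x$ needs monotonicity of $S$ (to control $C_n$ when the direction $A_n x$ varies with $n$), continuity from above of the extension (for the extension itself and for continuity of the directional derivative in its direction), and the two convexity estimates — all simultaneously.
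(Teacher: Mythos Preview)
Your proof is correct and follows essentially the same route as the paper's. Parts (i), (ii), (iii) for $x\in D(A)$, and (iv) are handled identically (local Lipschitz continuity plus convexity of the difference quotients). For the bootstrap to $x\in D(A_\delta)$, the paper invokes its Lemma~\ref{lem:Sprime0}(iv) to conclude that $C_n=\frac{S(t)(x+h_nA_nx)-S(t)x}{h_n}\downarrow S'_+(t,x)A_\delta x$, whereas you prove this inline via the order sandwich $S'_+(t,x)A_\delta x\le\inf_nC_n\le\inf_m S'_+(t,x)A_mx=S'_+(t,x)A_\delta x$; these are the same argument unpacked differently.
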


\begin{proof}
	(i) Fix $t\ge 0$. By Corollary \ref{loclipschitz} there exist $L\ge 0$ and $r>0$ such that
	\[
	\|S(t)(y+x)-S(t)x\|\le L\|y\|
	\]
	for all $y\in B(x,r)$. Since $S(h)x\to x$ as $h\downarrow 0$,  it follows that
	\[
	\bigg\|\frac{S(h)S(t)x-S(t)x}{h}\bigg\|=\bigg\|\frac{S(t)S(h)x-S(t)x}{h}\bigg\|\le L \bigg\|\frac{S(h)x-x}{h}\bigg\|<\infty
	\]
	for all $h\in(0,h_0^\prime]$ and some $h_0^\prime>0$.

	(ii) Since $x\in D_L$, it follows from Proposition \ref{domainlip} that the map $[0,\infty)\to X,$ $t\mapsto S(t)x$ is locally Lipschitz continuous. Fix $\mu\in M$. Since $\mu$ is continuous on $X$, see e.g.~\cite[Theorem 9.6]{aliprantis2006infinite}, the map  $[0,\infty)\to \R,$ $t\mapsto \mu S(t)x$ is also locally Lipschitz continuous and is therefore in $W^{1,\infty}_{\rm loc}\big([0,\infty)\big)$ by Lebesgue's theorem.
	That is, there exists a locally bounded measurable function $f_\mu\colon [0,\infty)\to \R$ with
	$\mu S(t)x=\mu x+\int_0^tf_\mu(s)\, {\rm d}s$.
	
	(iii)  Fix $t>0$, let $(h_n)_{n\in\mathbb{N}}$ be a sequence in $(0,\infty)$ with $h_n\downarrow 0$, and $x\in D(A)$. By Corollary \ref{loclipschitz}, there exists some $L>0$ such that	
	\begin{align*}
	&\bigg\|\frac{S(t+h_n)x-S(t)x}{h_n}-\frac{S(t)(x+h_nA x)-S(t)x}{h_n}\bigg\|=\bigg\|\frac{ S(t)S(h_n)x-S(t)(x+h_nA x) }{h_n}\bigg\|\\
	&\le L \bigg\|\frac{ S(h_n)x-x - h_nA x  }{h_n}\bigg\| = L\bigg\|\frac{S(h_n)x-x}{h_n}-A x\bigg\|\to 0\quad\mbox{as }n\to\infty.
	\end{align*}
	Moreover, the  sequence
	\[
	A_n\big(S(t)x\big):=\frac{S(t)(x+h_nA x)-S(t)x}{h_n}
	\]
	is decreasing and satisfies $A_n(S(t)x)\downarrow S^\prime_+(t,x)A x$. This shows that $S(t)x\in D(A_\delta)$ with $A_\delta S(t)x=	S^\prime_+(t,x)A x$. Recall that $A x= A_\delta x$
	for all $x\in D(A)$ by Lemma \ref{lem:domain}.
	
	If in addition, $S$ is monotone, continuous from above, and $x\in D(A_\delta)$,
	then there exists
	a bounded decreasing sequence $(A_nx)_{n\in\mathbb{N}}$ in $X$ such that
	\[
	\bigg\|\frac{S(h_n)x-x}{h_n}-A_nx \bigg\|\to 0\quad\mbox{and}\quad A_n x\downarrow A_\delta x.
	\]
	By Corollary \ref{loclipschitz}, there exists some $L>0$ such that	
	\begin{align*}
	&\bigg\|\frac{S(t+h_n)x-S(t)x}{h_n}-\frac{S(t)(x+h_nA_nx)-S(t)x}{h_n}\bigg\|\le L\bigg\|\frac{S(h_n)x-x}{h_n}-A_n x\bigg\|\to 0
	\end{align*}
	as $n\to\infty$.
	By Lemma \ref{lem:Sprime0}, the sequence $(A_n S(t)x)$ given by
	\[
	A_n S(t)x:=\frac{S(t)(x+h_nA_n x)-S(t)x}{h_n}
	\]
	is decreasing and satisfies $A_n S(t)x\downarrow S^\prime_+(t,x)A_\delta x$. This shows that $S(t)x\in D(A_\delta)$ with $A_\delta S(t)x=	S^\prime_+(t,x)A_\delta x$.
	
	(iv) Since $x\in D(A_\delta)$, it follows
	from Lemma \ref{lem:domain}
	that $x\in D_L$. Fix $\mu\in M$. By (ii) one has
	\[
	\mu S(t)x=\mu x+\int_0^t f_\mu(s)\,ds
	\]
	for all $t\ge 0$. In particular, $t\mapsto \mu S(t)x$ is differentiable almost everywhere. Since $\mu$ is continuous from above it follows from the previous step (iii) that the derivative is almost everywhere given by
	\[
	f_\mu(t)=\lim_{h\downarrow 0}\frac{\mu S(t+h)x-\mu S(t)x}{h}=\mu A_\delta S(t)x=\mu S^\prime_+(t,x)A_\delta x.
	\]
	The proof is complete.
\end{proof}

For the symmetric Lipschitz set of a sublinear monotone semigroup, we have the following result.

\begin{proposition}\label{thm:domsym}
 Let $S$ be sublinear and monotone. Then, the symmetric Lipschitz set $D_L^s$ is a linear subspace of $X$. If
 \begin{equation}\label{eq:condsymlip}
  -S(s)\big(-S(t)x\big)\geq S(t)\big(-S(s)(-x)\big)\quad \text{for all }s,t\geq 0\text{ and }x\in X,
 \end{equation}
 then $S(t)x\in D_L^s$ for all $t\geq 0$ and $x\in D_L^s$.
\end{proposition}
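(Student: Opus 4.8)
The plan is to reduce both claims to a single elementary fact about Banach lattices: if $a\le c\le b$, then $|c|\le|a|+|b|$, hence $\|c\|\le\|a\|+\|b\|$. Thus, to show that an element $z\in X$ lies in $D_L$, it suffices to sandwich the difference quotients $\frac{S(h)z-z}{h}$ between two elements whose norms stay bounded as $h\downarrow0$. The only structural inputs I would use are that each $S(h)$ is sublinear -- in particular subadditive, $S(h)(u+v)\le S(h)u+S(h)v$, positively homogeneous, and hence $S(h)0=0$ -- together with Theorem~\ref{generatormain2}(i) for the invariance of $D_L$ and, for the second part, the hypothesis \eqref{eq:condsymlip}.

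For the linearity of $D_L^s$: closedness under scalar multiples is immediate, since $\frac{S(h)(\lambda x)-\lambda x}{h}=\lambda\frac{S(h)x-x}{h}$ for $\lambda>0$, while for $\lambda<0$ one writes $\lambda x=|\lambda|(-x)$ and uses $-x\in D_L$, and $0\in D_L^s$ because $S(h)0=0$. For sums, fix $x,y\in D_L^s$. Subadditivity of $S(h)$ gives $S(h)(x+y)\le S(h)x+S(h)y$, hence the upper bound $\frac{S(h)(x+y)-(x+y)}{h}\le\frac{S(h)x-x}{h}+\frac{S(h)y-y}{h}$; applying subadditivity instead to $x=(x+y)+(-y)$ gives $S(h)(x+y)\ge S(h)x-S(h)(-y)$, hence the lower bound $\frac{S(h)(x+y)-(x+y)}{h}\ge\frac{S(h)x-x}{h}-\frac{S(h)(-y)-(-y)}{h}$. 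Both bounding elements have bounded norm for small $h$ because $x,y,-y\in D_L$, so the sandwich gives $x+y\in D_L$; running the same argument with $(-x,-y)$ in place of $(x,y)$ gives $-(x+y)\in D_L$, so $x+y\in D_L^s$.

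For the invariance statement, fix $x\in D_L^s$ and $t\ge0$. Since $D_L^s\subset D_L$, Theorem~\ref{generatormain2}(i) already yields $S(t)x\in D_L$, so it remains to bound $c_h:=\frac{S(h)(-S(t)x)+S(t)x}{h}$ for small $h>0$. For the lower bound, subadditivity of $S(h)$ applied to $0=(-S(t)x)+S(t)x$ gives $S(h)(-S(t)x)\ge-S(h)S(t)x=-S(t+h)x$, whence $c_h\ge-\frac{S(h)(S(t)x)-S(t)x}{h}$, and the norm of the right-hand side is bounded for small $h$ precisely because $S(t)x\in D_L$. For the upper bound, apply \eqref{eq:condsymlip} with $s=h$ to obtain $S(h)(-S(t)x)\le-S(t)(-S(h)(-x))$. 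Setting $w_h:=\frac{S(h)(-x)+x}{h}$, which has bounded norm for small $h$ since $-x\in D_L$, one has $-S(h)(-x)=x-hw_h$, so subadditivity and homogeneity of $S(t)$ give $S(t)x-S(t)(x-hw_h)\le S(t)(hw_h)=hS(t)w_h$, i.e.\ $c_h\le S(t)w_h$; the norm of $S(t)w_h$ is bounded by $\|S(t)\|_M$ as soon as $\|w_h\|\le M$, because $S(t)$ is a bounded operator. The sandwich then bounds $\|c_h\|$ for small $h$, so $-S(t)x\in D_L$ and therefore $S(t)x\in D_L^s$.

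The only mildly delicate point I anticipate is the upper bound in the last step: one must know to invoke \eqref{eq:condsymlip} to trade $S(h)(-S(t)x)$ for a quantity controlled by $S(t)$ and $S(h)(-x)$, and then to view $-S(h)(-x)$ as the small perturbation $x-hw_h$ of $x$ so that subadditivity of $S(t)$ can absorb it. Without hypothesis \eqref{eq:condsymlip} there is no a priori control of $S(h)(-S(t)x)$ from above, which is exactly why that condition enters; everything else is bookkeeping with the sublinearity inequalities and the lattice-norm sandwich.
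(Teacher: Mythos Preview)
Your proof is correct and follows essentially the same strategy as the paper: sandwich the difference quotients via sublinearity (subadditivity plus positive homogeneity), and for the invariance of $D_L^s$ use \eqref{eq:condsymlip} to obtain the upper bound on $S(h)(-S(t)x)+S(t)x$. The only cosmetic differences are that the paper handles $S(t)x\in D_L$ directly via the global Lipschitz constant of the sublinear operator $S(t)$ rather than quoting Theorem~\ref{generatormain2}(i), and it writes the final upper bound as $S(t)\big(S(h)(-x)+x\big)$ (bounded by $L\|S(h)(-x)+x\|$) instead of your equivalent $S(t)w_h$; the underlying use of subadditivity is identical.
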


\begin{proof}
  The sublinearity of $S$ implies that
  \[
   S(t)(x+\la y)-(x+\la y)\leq S(t)x-x+\la \big(S(t)y-y\big)
  \]
  and
  \[
   -S(t)(x+\la y)+x+\la y\leq S(t)(-x)+x+\la \big(S(t)(-y)+y\big)
  \]
  for all $x,y\in X$ and $\la>0$. Consequently,
  \[
   \|S(t)(x+\la y)-(x+\la y)\|\leq \|S(t)x-x\|+\|S(t)(-x)+x\|+\la \big(\|S(t)y-y\|+\| S(t)(-y)+y\|\big)
  \]
  for all $x,y\in X$ and $\la>0$, which shows that $x+ \la y \in D_L^s$ for all $x,y\in D_L^s$ and $\la>0$. Since $-x\in D_L^s$ for all $x\in D_L^s$, it follows that $D_L^s$ is a linear subspace of $X$.

  Now, let $x\in D_L^s$ and $t\geq 0$. Since $S(t)$ is sublinear and bounded, it is globally Lipschitz with some Lipschitz constant $L>0$ (cf.\ Lemma \ref{cor:Lip}). Therefore,
  \[
   \|S(h)S(t)x-S(t)x\|\leq L\|S(h)x-x\|,
  \]
  i.e., $S(t)x\in D_L$. It remains to show that $-S(t)x\in D_L$. First, observe that
  \[
   -S(t)x-S(h)\big(-S(t)x\big)\leq -S(t)x+S(h)S(t)x\leq S(t)\big(S(h)x-x\big)
  \]
  and, by \eqref{eq:condsymlip},
  \[
   S(h)\big(-S(t)x\big)+S(t)x\leq -S(t)\big(-S(t)(-x)\big)+S(t)x\leq S(t)\big(S(h)(-x)+x\big).
  \]
  Therefore,
  \[
   \big\|S(h)\big(-S(t)x\big)+S(t)x\big\|\leq L\Big(\|S(h)x-x\|+\big\|\big(S(h)(-x)+x\big)\big\|\Big),
  \]
  which shows that $-S(t)x\in D_L$.
\end{proof}

\begin{example}
Let $S$ be a translation invariant sublinear monotone semigroup on the space $\BUC=\BUC(G)$, where $G$ is an abelian group with a translation invariant metric $d$ such that $(G, d)$ is separable and complete. Here, \textit{translation invariant} means that
\[
\big(S(t)f(x+\cdot)\big)(0)=\big(S(t)f\big)(x) \quad \text{for all }f\in \BUC, \;x\in G\text{ and }t\ge 0.
\]
The space $\BUC$ of all bounded uniformly continuous functions $f\colon G \to \R$ is endowed with the supremum norm $\|f\|_\infty:=\sup_{x\in G}|f(x)|$.
Under mild continuity assumptions, the semigroup has a dual representation
\begin{equation}\label{dual:rep}
\big(S(t)f\big)(x)=\sup_{\mu\in\mathcal{P}_t} \int_G f(x+y)\,{\rm d}\mu_t(y)\quad\mbox{for all }f\in\BUC, \; x\in G\text{ and }t\geq 0.
\end{equation}
where $\mathcal{P}_t$ is a convex set of Borel measures on $G$ for all $t\ge 0$. For further details on dual representations we refer to \cite{denk2018kolmogorov} and, for further examples, we refer to \cite{dkn2}. Notice that, under \eqref{dual:rep},
\[
 -\big(S(t)(-f)\big)(x)=\inf_{\mu\in\mathcal{P}_t} \int_G f(x+y)\,{\rm d}\mu_t(y)\quad\mbox{for all }f\in\BUC,\; x\in G\text{ and }t\geq 0.
\]
Then, for $f\in \BUC$, $x\in G$, $\mu_t\in\mathcal{P}_t$ and $\mu_s\in\mathcal{P}_s$, it follows from \eqref{dual:rep} and Fubini's theorem that
\begin{align*}
 \int_G \big(S(t)f\big)(x+y)\,{\rm d}\mu_s(y)&\ge \int_G \int_G f(x+y+z)\,{\rm d}\mu_t(z)\,{\rm d}\mu_s(y)\\
 &=\int_G \int_G f(x+y+z)\,{\rm d}\mu_s(y)\,{\rm d}\mu_t(z)\\
 &\ge \int_G -\big(S(s)(-f)\big)(x+z)\,{\rm d}\mu_t(z).
\end{align*}
 Taking the infimum over all $\mu_s\in\mathcal{P}_t$ and supremum over all $\mu_t\in\mathcal{P}_s$ yields
 \[
  -S(s)\big(-S(t)f\big)\geq S(t)\big(-S(s)(f)\big).
 \]
 By Proposition \ref{thm:domsym}, we thus find that $D_L^s$ is $S(t)$-invariant for all $t\geq 0$.
\end{example}

\begin{remark}\label{prop:approx}
	Consider the setup of the previous example. Given $C\geq 0$ and $h_0>0$, let $D^s_L(C,h_0)$ denote the set of all $f\in D^s_L$ such that $\|S(h)f-f\|_\infty\le Ch$ and $\|S(h)(-f)+f\|_\infty\le Ch$ for all $h\in[0,h_0]$. Let $f\in D^s_L(C,h_0)$ and $\nu$ be a Borel probability measure on $G$. Then, it holds $f_\nu\in D^s_L(C,h_0)$, where $f_\nu(x):=\int_G f(x+y)\,\nu(dy)$. In fact, by a Banach space valued version of Jensen's inequality (cf.\ \cite{dkn2} or \cite{roecknen}) and the translation invariance of $S$,
	\begin{align*}
	 S(h)f_\nu-f_\nu&=S(h)\bigg(\int_G f(\,\cdot\,+y)\,{\rm d}\nu(y)\bigg)-f_\nu\leq \int_G \big(S(h)f\big)(\,\cdot\,+y)\,{\rm d}\nu(y)-f_\nu\\
	 &=\int_G \big(S(h)f\big)(\,\cdot\,+y)-f(\,\cdot\,+y)\,{\rm d}\nu(y)\leq Ch
	\end{align*}
        for all $h\geq 0$. In a similar way, it follows that
        \[
         S(h)(-f_\nu)+f_\nu\leq \int_G \big(S(h)(-f)\big)(\,\cdot\,+y)+f(\,\cdot\,+y)\,{\rm d}\nu(y)\leq Ch
        \]
        for all $h\in [0,h_0]$. Combining these two estimates yields that
        \[
         \big\|S(h)f_\nu- f_\nu\big\|_\infty\leq Ch\quad \text{and}\quad \big\|S(h)(-f_\nu)+ f_\nu\big\|_\infty\leq Ch
        \]
       for all $h\in [0,h_0]$. This shows that $f_\nu\in D_L^s(C,h_0)$.
\end{remark}

\section{Uniqueness}\label{sec:uniqueness}
 In this section, we show that a convex semigroup is uniquely determined on $D(A_\delta)$ through its generator $A_\delta$ if the semigroup is, in addition, monotone and continuous from above. The following is the main result of this paper.

\begin{theorem}\label{thm:uniqueness}
	Let $S$ be a convex monotone $C_0$-semigroup on $X$ which is continuous from above with monotone generator $A_\delta$. Let $y\colon [0,\infty)\to X$ be a continuous function with $y(t)\in D(A_\delta)$ for all $t\ge 0$, and assume that, for all $t\ge 0$ and $(h_n)_{n\in\mathbb{N}}$ in $(0,\infty)$ with $h_n\downarrow 0$, there exists a bounded decreasing sequence $(B_n y(t))_{n\in\mathbb{N}}$ in $X$ such that \[\bigg\|\frac{y(t+h_n)-y(t)}{h_n}-B_n y(t) \bigg\|\to 0\quad\mbox{and}\quad B_n y(t) \downarrow A_\delta y(t).\]
	Then, $y(t)=S(t)x$ for all $t\geq 0$, where $x:=y(0)$. 	
\end{theorem}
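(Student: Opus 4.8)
The plan is to fix $t>0$ (there is nothing to prove for $t=0$), put $x:=y(0)$, and show that for every $\mu\in M$ the real-valued function
\[
\ph_\mu\colon[0,t]\to\R,\qquad \ph_\mu(s):=\mu S(t-s)y(s),
\]
is constant. Since $M$ separates the points of $X_\de$ and $y(t),S(t)x\in X\subset X_\de$, this yields $\mu S(t)x=\ph_\mu(0)=\ph_\mu(t)=\mu y(t)$ for every $\mu\in M$, hence $S(t)x=y(t)$.

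First I would check that $\ph_\mu$ is continuous: combining the locally uniform local Lipschitz bounds of Corollary \ref{loclipschitz} with the strong continuity (S3) gives that $(r,z)\mapsto S(r)z$ is jointly continuous on $[0,t]\times X$, and together with the continuity of $y$ and of $\mu$ this makes $\ph_\mu$ continuous. It then suffices to show that the right-hand derivative of $\ph_\mu$ vanishes at every $s\in[0,t)$, i.e.\ that $\bigl(\ph_\mu(s+h_n)-\ph_\mu(s)\bigr)/h_n\to0$ for every sequence $h_n\downarrow0$, because a continuous function on an interval whose right derivative vanishes identically is constant.

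So fix $s\in[0,t)$ and $(h_n)$ in $(0,t-s]$ with $h_n\downarrow0$, put $\ta_n:=t-s-h_n$, and use $S(t-s)=S(\ta_n)S(h_n)$ to write
\[
\ph_\mu(s+h_n)-\ph_\mu(s)=\mu\Bigl(S(\ta_n)\bigl(y(s+h_n)\bigr)-S(\ta_n)\bigl(S(h_n)y(s)\bigr)\Bigr).
\]
Because $y(s)\in D(A_\de)$ and $y$ satisfies the hypothesis of the theorem, for the chosen $(h_n)$ there are bounded decreasing sequences $(A_ny(s))$ and $(B_ny(s))$ in $X$, both decreasing to $A_\de y(s)$, with $\bigl\|\tfrac{S(h_n)y(s)-y(s)}{h_n}-A_ny(s)\bigr\|\to0$ and $\bigl\|\tfrac{y(s+h_n)-y(s)}{h_n}-B_ny(s)\bigr\|\to0$; consequently $y(s+h_n)-S(h_n)y(s)=h_n\bigl(B_ny(s)-A_ny(s)\bigr)+o(h_n)$ in norm, and the element $B_ny(s)-A_ny(s)$ is order-squeezed, $-g_n\le B_ny(s)-A_ny(s)\le g_n$, where $g_n:=A_ny(s)\vee B_ny(s)-A_\de y(s)\downarrow0$ in $\overline X$. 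I would then exploit the convexity of $S(\ta_n)$ — via the tangent inequality $S(\ta_n)u-S(\ta_n)w\ge S^\prime_+(\ta_n,w)(u-w)$ and the chord inequality $S(\ta_n)(w+h_nv)\le(1-h_n)S(\ta_n)w+h_nS(\ta_n)(w+v)$, see Appendix \ref{append:direcder} — applied with $w\in\{S(h_n)y(s),y(s+h_n)\}$, together with the positive homogeneity, monotonicity and sublinearity of $v\mapsto S^\prime_+(\ta_n,w)v$, to bound $\ph_\mu(s+h_n)-\ph_\mu(s)$ from above and below by $\pm h_n$ times terms of the form $\mu S^\prime_+(\ta_n,w_n)z_m+o(1)$ (for $n\ge m$), where $w_n\to y(s)$ in norm, $\ta_n\to t-s$, and $z_m$ is, for each fixed $m$, an element of $X_\de$ with $z_m\to0$ monotonically as $m\to\infty$.

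The crux, and the step I expect to be the main obstacle, is to show that these terms vanish, which is precisely where continuity from above is indispensable. Since $S$ is monotone and continuous from above, the directional derivative itself is continuous from above: $S^\prime_+(\ta,v)z=\inf_{\th>0}\th^{-1}\bigl(S(\ta)(v+\th z)-S(\ta)v\bigr)$, and for each fixed $\th>0$ one has $S(\ta)(v+\th z_m)\downarrow S(\ta)v$ whenever $z_m\downarrow0$, whence $S^\prime_+(\ta,v)z_m\downarrow0$; combined with $\mu$ being continuous from above, $\mu S^\prime_+(t-s,y(s))z_m\to0$ along the relevant sequences. Feeding in the upper semicontinuity of $(\ta,v)\mapsto\mu S^\prime_+(\ta,v)d$ for fixed $d$ — which follows from the continuity of $(\ta,z)\mapsto S(\ta)z$ and the fact that $S^\prime_+$ is an infimum of difference quotients, cf.\ Lemma \ref{lem:Sprime0} and Appendix \ref{append:direcder} — a diagonalization in $n$ and $m$ gives $\ph_\mu(s+h_n)-\ph_\mu(s)=o(h_n)$ for every $h_n\downarrow0$. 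Hence the right derivative of $\ph_\mu$ vanishes on $[0,t)$, so $\ph_\mu$ is constant, and the theorem follows. A further point requiring care throughout is that the error directions must be interpreted as elements of $X_\de$, which (not being closed under subtraction) forces one to phrase the estimates via one-sided order bounds by sequences tending monotonically to $0$ rather than by crude norm bounds, the $o(h_n)$-remainders being absorbed by the local Lipschitz estimate $|\mu S^\prime_+(\ta,v)w|\le C\|w\|$.
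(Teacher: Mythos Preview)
Your overall architecture coincides with the paper's: fix $t>0$, set $g(s)=S(t-s)y(s)$, show $s\mapsto\mu g(s)$ is continuous and has vanishing right derivative for every $\mu\in M$, and conclude by the standard lemma that a continuous function with vanishing right derivative is constant. The continuity step and the Lipschitz reduction replacing $y(s+h_n)$ by $y+h_nB_ny$ and $S(h_n)y$ by $y+h_nA_ny$ are exactly as in the paper.

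The gap is in the core step. You propose an order squeeze $-g_n\le B_ny-A_ny\le g_n$ with $g_n:=(A_ny\vee B_ny)-A_\de y$, and then claim the resulting bounds are of the form $\mu S'_+(\ta_n,w_n)z_m$ with $z_m\in X_\de$ and $z_m\downarrow0$. But the natural candidates for $z_m$ (your $g_m$, or $B_my-A_\de y$, or $c_m-A_\de y$) all lie in $X-X_\de=-X_\de$, i.e., they are \emph{increasing} limits of elements of $X$, not decreasing ones. The extension of $S(\ta)$ and of $S'_+(\ta,w)(\cdot)$ provided in the paper (Lemma~\ref{ex:S}, Lemma~\ref{lem:Sprime0}) goes only to $X_\de$, and continuity from above gives nothing for increasing sequences. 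In the same vein, the upper semicontinuity you invoke for $(\ta,v)\mapsto\mu S'_+(\ta,v)d$ is correct but one-sided; it controls $\limsup_n\nu_n(c_m)$ from above, yet to make the difference $\nu_n(B_ny)-\nu_n(A_ny)$ vanish you also need a matching lower bound on $\liminf_n\nu_n(A_ny)$ (or an equivalent device), and your sketch does not supply one.

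The paper circumvents exactly this obstacle with a different mechanism. It introduces the convex functional $\nu(z):=\limsup_n\nu_n(z)$ on $X$, proves via the chord inequality that $\nu$ is dominated by the directional derivative at $(t-s,y)$ and hence is continuous from above, extends $\nu$ to $\overline\nu$ on $X_\de$, and then shows by an inf--sup interchange that $\limsup_n\nu_n(A_ny)=\limsup_n\nu_n(B_ny)=\overline\nu(A_\de y)$. A final subsequence argument (first pass to a subsequence realising the outer $\limsup$, then to a further one along which the inner $\liminf$ is a limit) turns these $\limsup$ identities into $\tfrac{\mu g(s+h_n)-\mu g(s)}{h_n}\to0$. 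This $\limsup$-functional device is precisely what is missing from your proposal; once you try to write down the $z_m$'s explicitly you will find you need it or an equivalent trick.
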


\begin{proof}
	Let $t>0$ and $g(s):=S(t-s)y(s)$ for all $s\in [0,t]$.
	Fix  $s\in (0,t)$. For every $h>0$ with $h<t-s$ one has
	\begin{align*}
	\frac{g(s+h)-g(s)}{h}&=\frac{S(t-s-h)y(s+h)-S(t-s)y(s)}{h}\\
	&=\frac{S(t-s-h)y(s+h)-S(t-s-h)y(s)}{h}\\
	&\quad-\frac{S(t-s-h)S(h)y(s)-S(t-s-h)y(s)}{h}.
	\end{align*}
	Let $(h_n)_{n\in\mathbb{N}}$ in $(0,\infty)$ with $h_n\downarrow 0$ and $\mu\in M$. By assumption, for $y:=y(s)\in D(A_\delta)$,
	there exists a bounded decreasing sequence $(B_n y)_{n\in\mathbb{N}}$ in $X$ with
	\begin{equation}\label{eq:Shn}
	\bigg\|\frac{y(s+h_n)-y(s)}{h_n}- B_n y\bigg\|\to 0 \quad\mbox{and}\quad B_n y\downarrow A_\delta y.
	\end{equation}
	We define
	\[
	\nu_n z:=\frac{\mu S(t-s-h_n)(y+h_nz)-\mu S(t-s-h_n)y}{h_n}
	\]
	for all $z\in X_\delta$ and $n\in \N$ with $t-s-h_n>0$, where we take the unique extension of $S$ to $X_\delta$ given by Lemma~\ref{ex:S}. Moreover, let
	\[
	 \nu z:=\limsup_{n\to\infty}\nu_n z\quad \text{for all }x\in X.
	\]
	We first show that
	\begin{equation}\label{eq:bound}
	\nu z\le \inf_{h>0}\frac{\mu S(t-s)(y+h z)-\mu S(t-s)y}{h}\quad \text{for all }z\in X.
	\end{equation}
        Indeed, for every $\varepsilon>0$, there exists some $h_0>0$ and, by Corollary \ref{cor:cont} there exists some $m_0\in\mathbb{N}$ such that
	\begin{align*}
	&\inf_{h>0}\frac{\mu S(t-s)(y+h z)-\mu S(t-s)y}{h}+2\varepsilon \ge \frac{\mu S(t-s)(y+h_0 z)-\mu S(t-s)y}{h_0}+\varepsilon \\
	&\qquad \qquad \ge \frac{\mu S(t-s-h_m)(y+h_0 z)-\mu S(t-s-h_m)y}{h_0}
	\end{align*}
	for all $m\ge m_0$. Hence, for all $n\ge m_0$, which satisfy $h_n\le h_0$, one has
	\begin{align*}
         \inf_{h>0} & \frac{\mu S(t-s)(y+h z)  -\mu S(t-s)y}{h}+2\varepsilon \\
        & \qquad\ge \frac{\mu S(t-s-h_n)(y+h_n z)-\mu S(t-s-h_n)y}{h_n} = \nu_n z,
	\end{align*}
	which shows \eqref{eq:bound} by taking the limit superior as $n\to\infty$ and letting $\varepsilon\downarrow 0$. As a consequence of \eqref{eq:bound}, it follows that $\nu$ is continuous from above (on $X$). Indeed, for every sequence $(z_n)_{n\in\mathbb{N}}$ in $X$ with $z_n\downarrow 0$, one has
	\[
	0\le \inf_{n\in\mathbb{N}} \nu z_n\le\inf_{h>0}\inf_{n\in\mathbb{N}}\frac{\mu S(t-s)(y+h z_n)-\mu S(t-s)y}{h}=0
	\]
	so that $\nu z_n\downarrow 0$. Moreover, by definition, $\nu z=\lim_{n\to \infty}\sup_{k\geq n} \nu_k z$ for all $z\in X$, and therefore $\nu\colon X\to \R$ is convex. By \cite[Lemma 3.9]{denk2018kolmogorov}, $\nu$ uniquely extends to a convex monotone functional $\overline \nu\colon X_\delta\to \R$, which is continuous from above. We next show that
	\begin{equation}\label{eq:limsup}
	\limsup_{n\to\infty}\nu_n B_n y=\overline \nu A_\delta y.
	\end{equation}
	To that end, let $\varepsilon>0$. Then, there exist $n_0,m_0\in\mathbb{N}$ such that
	\[
	\overline \nu A_\delta y +2\varepsilon\ge\overline \nu B_{n_0}y+\varepsilon=\nu B_{n_0}y+\varepsilon\ge \nu_m B_{n_0}y\ge \nu_m B_my
	\]
	for all $m\ge  m_0\vee n_0$, where the last inequality follows by monotonicity of $\nu_m$.\ This shows that \[\overline \nu A_\delta y\ge\limsup_{n\to\infty}\nu_n B_ny.\] Further,
	\begin{align*}
	 \overline \nu A_\delta y&= \inf_{m\in \N} \nu B_m y=\inf_{m\in \N}\inf_{n\in \N}\sup_{k \geq n}\nu_k B_my=\inf_{n\in \N}\inf_{m\in \N}\sup_{k \geq n}\nu_k B_my\\
	 &\leq \inf_{n\in \N} \sup_{k\geq n}\nu_k B_k y=\limsup_{n\to \infty} \nu_nB_ny.
	\end{align*}
	By Lemma \ref{loclipschitz}, there exists some $L>0$ such that
	\[
	\bigg\|\frac{S(t-s-h_n)y(s+h_n) -S(t-s-h_n)\big(y+h_n B_n y\big)}{h_n} \bigg\|\le L\bigg\|\frac{y(s+h_n)-y}{h_n}-B_n y \bigg\|\!\to 0
	\]
	as $n\to\infty$.
	Therefore, we conclude that
	\begin{equation}\label{eq:limsup1}
	\limsup_{n\to\infty} \mu\bigg(\frac{S(t-s-h_n)y(s+h_n)-S(t-s-h_n)y}{h_n}\bigg)=\limsup_{n\to\infty} \nu_n B_n y=\overline\nu A_\delta y.
	\end{equation}
	
	Since $y=y(s)\in D(A_\delta)$, it follows from \eqref{domweak} that there exists a bounded decreasing sequence $(A_n y)_{n\in\mathbb{N}}$ with
	\[
	\bigg\|\frac{S(h_n)y-y}{h_n}- A_n y\bigg\| \to 0\quad\mbox{and}\quad A_n y\downarrow A_\delta y.
	\]
	By the same arguments as before, we get
	\begin{equation}\label{eq:limsup2}
	\limsup_{n\to\infty} \mu\bigg(\frac{S(t-s-h_n)S(h_n)y-S(t-s-h_n)y}{h_n}\bigg)=\limsup_{n\to\infty} \nu_n A_n y=\overline \nu A_\delta y.
	\end{equation}
	Hence, in combination with \eqref{eq:limsup1}, we get
	\begin{align}
	&\limsup_{n\to\infty} \mu\bigg(\frac{S(t-s-h_n)y(s+h_n)-S(t-s-h_n)y(s)}{h_n}\bigg)\nonumber \\
	&\qquad =\limsup_{n\to\infty} \mu\bigg(\frac{S(t-s-h_n)S(h_n)y(s)-S(t-s-h_n)y(s)}{h_n}\bigg)\label{eqmu}
	\end{align}
	for every sequence $(h_n)_{n\in\mathbb{N}}$ in $(0,\infty)$ with $h_n\downarrow 0$ and all $\mu\in M$. As a consequence, we conclude that
	\begin{equation}\label{eq:derivativezero}
	\frac{\mu g(s+h_n)-\mu g(s)}{h_n}\to 0
	\end{equation}
	for every sequence $(h_n)_{n\in\mathbb{N}}$ in $(0,\infty)$ with $h_n\downarrow 0$ and all $\mu\in M$. Indeed, by passing to a subsequence $(n_k)_{k\in\mathbb{N}}$, we may assume that
	\[
	\limsup_{n\to\infty}\frac{\mu g(s+h_n)-\mu g(s)}{h_n}= \lim_{k\to\infty}\frac{\mu g(s+h_{n_k})-\mu g(s)}{h_{n_k}}.
	\]
	By passing to another subsequence, which we still denote by $(n_k)_k$, we can further assume that
	\begin{align}
	&\liminf_{k\to\infty} \mu\bigg(\frac{S(t-s-h_{n_k})S(h_{n_k})y(s)-S(t-s-h_{n_k})y(s)}{h_{n_k}}\bigg)\nonumber\\&\qquad =\limsup_{k\to\infty} \mu\bigg(\frac{S(t-s-h_{n_k})S(h_{n_k})y(s)-S(t-s-h_{n_k})y(s)}{h_{n_k}}\bigg).\label{eqmu2}
	\end{align}
	Then, by applying the equality \eqref{eqmu} to the  subsequence $(h_{n_k})_{k\in\mathbb{N}}$ we obtain
	\begin{align*}
	&\limsup_{n\to\infty}\frac{\mu g(s+h_n)-\mu g(s)}{h_n}= \lim_{k\to\infty}\frac{\mu g(s+h_{n_k})-\mu g(s)}{h_{n_k}}\\
	&\qquad\quad\le \limsup_{k\to\infty} \mu\bigg(\frac{S(t-s-h_{n_k})y(s+h_{n_k})-S(t-s-h_{n_k})y(s)}{h_{n_k}}\bigg)\\
	&\qquad\qquad\; -\liminf_{k\to\infty} \mu\bigg(\frac{S(t-s-h_{n_k})S(h_{n_k})y(s)-S(t-s-h_{n_k})y(s)}{h_{n_k}}\bigg)=0,
	\end{align*}
	where the last equality follows from \eqref{eqmu} and \eqref{eqmu2}.
	With similar arguments, we also obtain $\liminf_{n\to\infty}\frac{\mu g(s+h_n)-\mu g(s)}{h_n}\ge 0$, which shows \eqref{eq:derivativezero}.
	
	Since $\mu$ is continuous on $X$, see e.g.~\cite[Theorem 9.6]{aliprantis2006infinite}, it follows by the same arguments as in the proof of \cite[Theorem 3.5]{dkn3} that $s\mapsto \mu g(s)$ is continuous on $[0,t]$. By \cite[Lemma 1.1, Chapter 2]{pazy2012semigroups}, we conclude that the map $s\mapsto \mu g(s)$ is constant on $[0,t]$, since it is continuous and its right derivative vanishes on $[0,t)$. In particular, $\mu y(t)=\mu g(t)=\mu g(0)=\mu S(t)y(0)$ for all $\mu\in M$. This shows that $y(t)= S(t)y(0)$ as $M$ separates the points of $X$.
\end{proof}

\begin{corollary}\label{cor:uniqueness}
	Let $S$ be a convex monotone $C_0$-semigroup on $X$ which is continuous from above with monotone generator $A_\delta$, and let $T$ be a convex $C_0$-semigroup on $X$ with generator $B$ and monotone generator $B_\delta$ such that  $B_\delta\subset A_\delta$.
	If $\overline{D(B)}=X$, then $S(t)=T(t)$ for
	all $t\geq 0$.	
\end{corollary}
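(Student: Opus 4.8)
The plan is to reduce the statement to Theorem~\ref{thm:uniqueness}. Since $\overline{D(B)}=X$ and the operators $S(t)$ and $T(t)$ are continuous (being bounded and convex, they are locally Lipschitz by Corollary~\ref{loclipschitz}), it suffices to prove that $T(t)x=S(t)x$ for every $t\ge 0$ and every $x\in D(B)$; equality on all of $X$ then follows by density. So I fix $x\in D(B)$, set $y(t):=T(t)x$ for $t\ge 0$, and aim to check that the curve $y$ satisfies all hypotheses of Theorem~\ref{thm:uniqueness} relative to $S$; its conclusion is precisely $y(t)=S(t)y(0)=S(t)x$.

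First I would record the regularity of $y$. By Lemma~\ref{lem:domain} applied to the semigroup $T$, one has $D(B)\subset D(B_\delta)\subset D_L$ and $B_\delta x=Bx$; in particular $x\in D_L$, so Proposition~\ref{domainlip} shows that $t\mapsto y(t)=T(t)x$ is locally Lipschitz, hence continuous. Next I would exhibit the required approximating sequences. Fix $t\ge 0$ and $(h_n)_{n\in\mathbb{N}}$ in $(0,\infty)$ with $h_n\downarrow 0$. Theorem~\ref{generatormain2}(iii), applied to the convex $C_0$-semigroup $T$ and to $x\in D(B)$ (using only the part valid on $D(B)$, which does not require monotonicity or continuity from above of $T$), gives $T(t)x\in D(B_\delta)$ together with the sequence
\[
B_n y(t):=\frac{T(t)(x+h_nBx)-T(t)x}{h_n},
\]
which is bounded, decreasing, satisfies $B_n y(t)\downarrow T^\prime_+(t,x)Bx=B_\delta T(t)x$, and obeys $\big\|\tfrac{y(t+h_n)-y(t)}{h_n}-B_n y(t)\big\|\to 0$, the last limit coming from the local Lipschitz estimate of Corollary~\ref{loclipschitz} together with $\|\tfrac{T(h_n)x-x}{h_n}-Bx\|\to 0$. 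Because $B_\delta\subset A_\delta$, we get $y(t)\in D(A_\delta)$ with $A_\delta y(t)=B_\delta T(t)x$, hence $B_n y(t)\downarrow A_\delta y(t)$, which is exactly the hypothesis of Theorem~\ref{thm:uniqueness}.

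Having verified the hypotheses, Theorem~\ref{thm:uniqueness} yields $T(t)x=y(t)=S(t)x$ for all $t\ge 0$ and $x\in D(B)$, and then $S(t)=T(t)$ on $X$ by density and continuity. The argument is mostly bookkeeping once Theorems~\ref{generatormain2} and~\ref{thm:uniqueness} are available; the points that need care are (a) applying Theorem~\ref{generatormain2}(iii) to $T$ only through its $D(B)$-part, since $T$ is not assumed monotone or continuous from above, and (b) correctly using $B_\delta\subset A_\delta$ so that the sequence $(B_n y(t))_{n\in\mathbb{N}}$, built from the generator $B$ of $T$, is recognized as decreasing all the way to $A_\delta y(t)$ and not merely to $B_\delta y(t)$.
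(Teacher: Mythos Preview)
Your proposal is correct and follows essentially the same approach as the paper: define $y(t)=T(t)x$ for $x\in D(B)$, invoke Theorem~\ref{generatormain2}(iii) for $T$ on $D(B)$ to produce the approximating sequence $B_n y(t)=\tfrac{T(t)(x+h_nBx)-T(t)x}{h_n}$, use $B_\delta\subset A_\delta$ to recognize its limit as $A_\delta y(t)$, apply Theorem~\ref{thm:uniqueness}, and conclude by density. The only cosmetic difference is that the paper cites Corollary~\ref{cor:cont} for continuity of $t\mapsto T(t)x$ and Lemma~\ref{cor:Lip} for the density extension, whereas you route continuity through Proposition~\ref{domainlip}; both work.
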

\begin{proof}
	For every $x\in D(B)$, the mapping $y\colon [0,\infty)\to X$, $y(t):=T(t)x$ satisfies the assumptions of Theorem \ref{thm:uniqueness}. Indeed, $y(0)=x$ by definition, $t\mapsto y(t)$ is continuous by Corollary \ref{cor:cont}, and $y(t)\in D(B_\delta)\subset  D(A_\delta)$ by Theorem \ref{generatormain2} with
	\[\bigg\|\tfrac{y(t+h_n)-y(t)}{h_n}-B_n y(t)\bigg\|\to 0\quad\mbox{and}\quad B_n y(t)\downarrow B_\delta y(t)= A_\delta y(t)\]
	where $B_n y(t):=\tfrac{T(t)(x+h_nB x)-T(t)x}{h_n}$ for all $n\in \N$.
	Hence, by Theorem \ref{thm:uniqueness}, it follows that $T(t)x=y(t)=S(t)x$ for all $t\ge 0$. Since, by Lemma \ref{cor:Lip}, the bounded convex functions $T(t)$ and $S(t)$ are continuous, and $\overline{D(B)}=X$, it holds
	$S(t)=T(t)$ for all $t\ge 0$.
\end{proof}	

\section{Examples}\label{sec:examples}
\subsection{The uncertain shift semigroup}\label{sec:ushift}
Let $G$ be a convex set endowed with a metric $d\colon G\times G\to [0,\infty)$. We assume that, for every $x,y\in G$ and $\lambda\in(0,1)$, there exists some $\lambda(x,y)\in G$ such that $d(x,\lambda(x,y))=\lambda d(x,y)$ and $d(\lambda(x,y),y)=(1-\lambda) d(x,y)$. The space of all bounded uniformly continuous functions $f\colon G \to \R$ is denoted by $\BUC=\BUC(G)$  and endowed with
the supremum norm $\|f\|_\infty:=\sup_{x\in G}|f(x)|$. Notice that $\BUC$ is a Riesz subspace of the Dedekind $\sigma$-complete Riesz space $\mathcal{L}^\infty$ of all bounded Borel measurable functions $f\colon G\to \R$. On $\mathcal{L}^\infty$ we consider the partial order $f\le g$ whenever $f(x)\le g(x)$ for all $x\in G$.

The \emph{uncertain shift semigroup} $S$ on $\BUC$ is defined by
\[
\big(S(t)f\big)(x):=\sup_{d(x,y)\le t} f(y)\quad \text{for all }f\in \BUC,\; x\in G\text{ and }t\geq 0.
\]

\begin{lemma}\label{lem:robshift}
	$S$ is a sublinear monotone $C_0$-semigroup on $\BUC$. Moreover, $$D_L=D_L^s=\Lip\nolimits_b,$$ where $\Lip_b=\Lip_b(G)$ is the space of all bounded Lipschitz continuous functions $G\to \R$.
\end{lemma}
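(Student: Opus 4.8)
The plan is to prove the two assertions of the lemma in turn: first that $S$ is a bounded sublinear monotone $C_0$-semigroup on $\BUC$, and then that the three sets $D_L$, $D_L^s$ and $\Lip\nolimits_b$ coincide. Throughout, the only structural input beyond the triangle inequality is the metric convexity of $G$, which I will use exclusively in the following form: given $x,y\in G$ and $r\in\big(0,d(x,y)\big)$, there is a point at distance $r$ from $x$ and $d(x,y)-r$ from $y$ (namely $\la(x,y)$ with $\la=r/d(x,y)$).

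For the semigroup part, I would first note that for fixed $t\ge 0$ and $x\in G$ the functional $f\mapsto\big(S(t)f\big)(x)=\sup_{d(x,y)\le t}f(y)$ is a supremum of positive linear evaluation functionals, hence convex, positively homogeneous and monotone, and $\|S(t)f\|_\infty\le\|f\|_\infty$, so each $S(t)$ is bounded and sublinear, and $S$ is monotone and sublinear. To see $S(t)f\in\BUC$ for $f\in\BUC$ with modulus of continuity $\om_f$, I would fix $x,x'\in G$ and, given $y$ with $d(x,y)\le t$, produce $z$ with $d(x',z)\le t$ and $d(z,y)\le d(x,x')$: take $z=y$ if $d(x',y)\le t$, and otherwise (so $t>0$) the point $z$ on the way from $x'$ to $y$ with $d(x',z)=t$, so that $d(z,y)=d(x',y)-t\le d(x',x)+d(x,y)-t\le d(x,x')$. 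Then $\big(S(t)f\big)(x')\ge f(y)-\om_f\big(d(x,x')\big)$, and taking the supremum over $y$ and exchanging the roles of $x,x'$ gives $\big|\big(S(t)f\big)(x)-\big(S(t)f\big)(x')\big|\le\om_f\big(d(x,x')\big)$; in particular $S(t)$ also maps $\Lip\nolimits_b$ into itself with the same constant. Axiom (S1) is immediate since $d(x,y)\le0$ forces $y=x$; for (S2) the triangle inequality gives $S(t)S(s)f\le S(t+s)f$, and for the converse, given $y$ with $d(x,y)\le t+s$ I would take $z=x$ if $d(x,y)\le s$ and otherwise the point $z$ on the way from $x$ to $y$ with $d(z,y)=s$ (so $d(x,z)=d(x,y)-s\le t$), which makes $\big(S(s)f\big)(z)\ge f(y)$ and $d(x,z)\le t$; and (S3) follows from $\|S(t)f-f\|_\infty\le\sup_{d(x,y)\le t}|f(y)-f(x)|\le\om_f(t)\to0$.

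For the domain identification, the easy inclusion $\Lip\nolimits_b\subseteq D_L^s\subseteq D_L$ comes from the estimate $\|S(h)f-f\|_\infty\le\sup_{d(x,y)\le h}|f(y)-f(x)|\le Lh$, valid for every $L$-Lipschitz $f$ and for $-f$. For the reverse inclusion $D_L\subseteq\Lip\nolimits_b$, I would start from $\|S(h)f-f\|_\infty\le Ch$ for $h\in(0,h_0]$; applying this with $h=d(x,y)$ whenever $0<d(x,y)\le h_0$ yields $f(y)-f(x)\le\big(S(h)f\big)(x)-f(x)\le Ch=Cd(x,y)$ on that scale. For arbitrary $x,y$ I would chain: if $d(x,y)>h_0$, let $z$ be the point on the way from $x$ to $y$ with $d(z,y)=h_0$, so $d(x,z)=d(x,y)-h_0$, whence $f(y)-f(x)\le Ch_0+\big(f(z)-f(x)\big)$, and an induction on $\lceil d(x,y)/h_0\rceil$ gives $f(y)-f(x)\le Cd(x,y)$ for all $x,y\in G$. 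Exchanging $x$ and $y$ gives $|f(x)-f(y)|\le Cd(x,y)$, and since $f\in\BUC$ is bounded, $f\in\Lip\nolimits_b$; combining the inclusions, $D_L=D_L^s=\Lip\nolimits_b$.

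The parts I expect to require the most care are precisely those using metric convexity: the hypothesis supplies, for each pair $x,y$, only a single point with prescribed distances to $x$ and $y$, with no a priori control on distances between two such points, so the uniform-continuity argument, the nontrivial half of (S2), and above all the passage from the local Lipschitz bound to the global one must each be arranged to invoke it only in that single form — in particular the global Lipschitz estimate is obtained by the recursion on $\lceil d(x,y)/h_0\rceil$ rather than by subdividing a geodesic into equal arcs.
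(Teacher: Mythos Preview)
Your proof is correct and follows essentially the same approach as the paper's. The one minor difference is in the step $D_L\subseteq\Lip\nolimits_b$: you extend the local estimate $|f(x)-f(y)|\le Cd(x,y)$ for $d(x,y)\le h_0$ to a global one by chaining along metric-convex intermediate points, whereas the paper simply invokes boundedness of $f$ (so that $|f(x)-f(y)|\le 2\|f\|_\infty\le(2\|f\|_\infty/h_0)\,d(x,y)$ once $d(x,y)>h_0$), obtaining a possibly larger Lipschitz constant but with less work.
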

\begin{proof}
	We first show that $S(t)\colon \BUC\to \BUC$ is well-defined and bounded. To this end, fix $f\in \BUC$. Since
	\[
	|S(t)f(x)|\le \sup_{d(x,y)\le t}|f(y)|\le \|f\|_\infty\quad\mbox{for all }x\in G,
	\]
	it follows that $\|S(t)f\|_\infty\le \|f\|_\infty$.
	Fix $\varepsilon >0$ and $\delta>0$ such that
	$|f(x)-f(y)|\leq \varepsilon$
	for all $x,y\in G$ with $d(x,y)\leq \de$. Let $x,y\in G$ with $d(x,y)\leq \de$ and $z\in G$ with $d(x,z)\leq t$. Then, for $\la:=\frac{t}{t+\de}$, one has
	\[
	d\big(y,\la(y,z)\big)=\la d(y,z)\leq \la(t+\delta) = t
	\]
	and
	\[
	d\big(z,\la(y,z)\big)= (1-\la) d(y,z)\leq (1-\la) (t+\de)= \de
	\]
	Hence,
	\[
	f(z)-\big(S(t)f\big)(y)\leq f(z)-f\big(\la(y,z)\big)\leq \ep.
	\]
	Taking the supremum over all $z\in G$ with $d(x,z)\leq t$, it follows that
	\[
	\big(S(t)f\big)(x)-\big(S(t)f\big)(y)\leq \ep.
	\]
	By a symmetry argument, we obtain that $|S(t)f(x)-S(t)f(y)|\leq \ep$, showing that $S(t)f$ is uniformly continuous with the same modulus of continuity as $f$. We thus have shown that $S(t)\colon \BUC\to \BUC$ is well-defined and bounded. By definition, each $S(t)$ is sublinear and monotone, and $S(0)f=f$ for all $f\in \BUC$. Moreover, for $t\leq \de$, 
	\[
	\big|\big(S(t)f\big)(x)-f(x)\big|\leq\sup_{d(x,y)\leq t}|f(y)-f(x)|\leq \ep
	\]
	for all $x\in G$. Hence, $\|S(t)f-f\|_\infty\leq \ep$ for all $t\leq \de$, which shows that $S$ is strongly continuous. It remains to show that $S$ satisfies the semigroup property. 
	Let $s,t\geq 0$. Further, let $x\in G$ and $z\in G$ with $d(x,z)\leq s+t$. Then, for $\la:=\frac{t}{s+t}$, it holds
	\[
	d\big(z,\la(x,z)\big)=(1-\la) d(x,z)\leq s
	\]
	and
	\[
	d\big(x,\la(x,z)\big)=\la d(x,z)\leq t.
	\]
	Hence,
	\[
	f(z)\leq \sup_{d(\la(x,z),y)\leq s} f(y)=\big(S(s)f\big)\big(\la(x,z)\big)\leq \sup_{d(x,y)\leq t} \big(S(s)f\big)(y)=\big(S(t)S(s)f\big)(x).
	\]
	Taking the supremum over all $z\in G$ with $d(x,z)\leq s+t$, it follows that
	\[
	\big(S(s+t)f\big)(x)\leq \big(S(t)S(s)f\big)(x) .
	\]
	Now, let $z\in G$ with $d(x,z)\leq t$. Then, there exists a sequence $(z_n)_{n\in\mathbb{N}}$ in $G$ with $d(z,z_n)\leq s$ and $f(z_n)\to \big(S(s)f\big)(z)$.
	Then,
	\[
	\big(S(s)f\big)(z)=\lim_{n\to \infty} f(z_n)\leq \sup_{d(x,y)\leq s+t}f(y)=\big(S(s+t)f\big)(x).
	\]
	Taking the supremum over all $z\in G$ with $d(x,z)\leq t$, yields that
	\[
	\big(S(t)S(s)f\big)(x)\leq \big(S(s+t)f\big)(x).
	\]
	Altogether, we have shown that $S$ is a sublinear monotone $C_0$-semigroup on $\BUC$.
	
	Now, let $f\in D_L$. Then, there exist $h_0>0$ and $C\geq 0$ such that $\|S(h)f-f\|_\infty\leq Ch$ for all $h\in [0,h_0]$. Hence, for all $x,y\in G$ with $d(x,y)=:h\leq h_0$,
	\[
	 f(x)-f(y)\leq \big(S(h)f\big)(y)-f(y)\quad\text{and}\quad f(y)-f(x)\leq \big(S(h)f\big)(x)-f(x).
	\]
	This implies that $|f(x)-f(y)|\leq \|S(h)f-f\|_\infty\leq Ch=Cd(x,y)$. Since $f\in \BUC$ is bounded, it follows that $f\in \Lip_b$. On the other hand, 
	if $f\in \Lip_b\subset \BUC$ with Lipschitz constant $C>0$, it follows that
	\[
	 \|\big(S(h)f\big)(x)-f(x)\|\leq \sup_{d(x,y)\leq h}|f(y)-f(x)|\leq Cd(x,y)\leq Ch
	\]
        for all $x\in G$ and $h\geq 0$. Therefore, $f\in D_L$. Since $-f\in \Lip_b$ for all $f\in \Lip_b$, it follows that $\Lip_b\subset D_L^s$. Since, by definition, $D_L^s\subset D_L$, the assertion follows.
\end{proof}	

We now specialize on the case, where $G=\mathbb{R}$ endowed with the Euclidean distance $d(x,y)=|x-y|$. In this case, the uncertain shift semigroup is given by
\[
\big(S(t)f\big)(x)=\sup_{|y|\le t} f(x+y)
\]
for all $x\in\R$ and $t\in[0,\infty)$. By Lemma \ref{lem:robshift}, it follows that $S$ is a sublinear monotone $C_0$-semigroup on $\BUC$. 
In addition, by Dini's lemma, it is continuous from above. Denote by $A_\delta\colon D(A_\delta)\subset \BUC\to \BUC_\delta$ the monotone generator of $S$. 
Notice that $\BUC_\delta$ is the space of all bounded upper semicontinuous functions $\mathbb{R}\to \R$. Moreover, by Lemma \ref{lem:robshift}, we have that $D_L=D_L^s=W^{1,\infty}$. Recall that the space of all Lipschitz continuous functions coincides with the space $W^{1,\infty}$ of all functions with weak derivative $f^\prime\in L^\infty$ (w.r.t.~the Lebesgue measure). As usual, we denote by $\BUC^1$ the space of all $f\in\BUC$ which are differentiable with $f^\prime\in \BUC$. From a PDE point of view, one might consider $\BUC^1$ to be the canonical choice for the
domain of the generator of $S$. However, the following example shows that this does not yield an
m-accretive operator.

\begin{example}
  \label{1.5}
  Let $X=\BUC$ and $B\colon D(B)\to X$ with $Bf := |f'|$ for $f\in D(B) := \BUC^1$. Then $B$ is accretive, i.e., for some (equivalently, for any) $h>0$,  $1+hB$ is injective and
  \[ \big\| (1+hB)^{-1} g_1 - (1+hB)^{-1} g_2 \big\| \le \|g_1-g_2\|\quad \text{for all }g_1,g_2\in R(1+hB),\]
  cf.\ \cite[Proposition 3.1]{Barbu10} and \cite[Formula (8)]{Crandall-Liggett71}. To see this,
  let $f_1,f_2\in D(B)$ and $h>0$. We set $g:= f_1-f_2$ and choose a sequence $(x_k)_{k\in\mathbb{N}}$ in $\R$ with
  $|g(x_k)|\to \|g\|_\infty$ as $k\to\infty$. If $(x_k)_{k\in\mathbb{N}}$ has a finite accumulation point $x_0$, then
  we have $|g(x_0)|=\|g\|_\infty$, and the function $g$ has a local extremum at $x_0$. Consequently, $g'(x_0)=0$ and
  therefore $f_1'(x_0) = f_2'(x_0)$. We obtain
  \begin{align*}
    \big\| f_1 -f_2 + & h  (|f_1'|-|f_2'|)\big\|_\infty \ge \big| f_1(x_0) - f_2(x_0) +  h  (|f_1'(x_0)|
    - |f_2'(x_0)|\big| \\
    & = |f_1(x_0)-f_2(x_0)| = |g(x_0)| = \|g\|_\infty = \|f_1-f_2\|_\infty.
  \end{align*}
  If $(x_k)_{k\in\N}$ has no finite accumulation point, we may w.l.o.g.\ assume that $x_k\to \infty$ as $k\to\infty$.
  Moreover, taking a subsequence we may also assume that $g(x_k)\to \pm \|g\|_\infty$ as $k\to\infty$.
  Again, w.l.o.g.\ let $g(x_k)\to \|g\|_\infty$ as $k\to\infty$. Let $\ep>0$, and choose $k_0\in\N$ with
  \[ \|g\|_\infty - \tilde\ep \le g(x_k) \le \|g\|_\infty \quad \text{for all }k\ge k_0,\]
  where we have set $\tilde \ep := \min\{\tfrac{\ep}{2}, \tfrac{\ep}{2h} \}$. Let $\ell_0>k_0$ with
  $x_{\ell_0}\ge x_{k_0}+1$.   As $g\in\BUC^1$, there exists some
  $y_0\in (x_{k_0}, x_{\ell_0})$ with
  \[ \tilde\ep \ge |g(x_{\ell_0}) - g(x_{k_0})| = |g'(y_0)|\, |x_{\ell_0}-x_{k_0}|  \ge |g'(y_0)|.\]
  We obtain
  \begin{align*}
    \big\|f_1-f_2 + & h  (|f_1'| - |f_2'|)\big\|_\infty \ge \big| f_1(y_0) - f_2(y_0) +  h  (|f_1'(y_0)|
    - |f_2'(y_0)|)\big| \\
    & \ge |f_1(y_0)-f_2(y_0)| -  h  \big| |f_1'(y_0)|    - |f_2'(y_0)|\big|\\
    & \ge |f_1(y_0)-f_2(y_0)| -  h   |  f_1'(y_0)     -  f_2'(y_0) |\\
    & =  |g(y_0)| -  h  |g'(y_0)| \ge \|g\|_\infty -\tfrac\ep 2- h\,\tfrac\ep {2h} = \|f_1-f_2\|_\infty - \ep.
  \end{align*}
  As $\ep>0$ was arbitrary, we see that also in this case the inequality
  \[ \big\|f_1-f_2 +  h  (|f_1'| - |f_2'|)\big\|_\infty \ge \|f_1-f_2\|_\infty \]
  holds, which shows that $B$ is accretive.

  However, the operator $B$ is not m-accretive, i.e., the operator $1+hB$ is not surjective. For this, let $h>0$, and
  set $u(x) := (1-|x|)\boldsymbol 1_{[-1,1]}(x)$ for $x\in\R$. Assume that there exists some $f\in D(B)$ with
  \begin{equation}\label{ex.1.5-1-1}
  f(x) + h|f'(x)| = u(x) \quad \text{for }x\in\R.
  \end{equation}
  As $u$ is an even function, we see that the function $\bar f$ defined by $\bar f(x) := f(-x)$ is also a solution
  of \eqref{ex.1.5-1-1}. As $B$ is accretive, the operator $1+hB$ is injective, which shows that
  $\bar f=f$, i.e., the solution $f$ is an even function, too. As $f\in \BUC^1$, we get $f'(0)=0$ and
  therefore $f(0)=u(0)=1$. Now, the differentiability of $f$ leads to a contradiction to $f(x)\le u(x)$ for all $x\in \R$, which holds
  by \eqref{ex.1.5-1-1}.
\end{example}

\begin{proposition}\label{prop:firstorder}
	Let $G=\R$. Then, $\BUC^1\subset D(A)\subset D(A_\delta)\subset D_L=D_L^s=W^{1,\infty}$.
	In particular, $S(t)f\in W^{1,\infty}$ for every $f\in W^{1,\infty}$ and all $t\ge 0$.	Further, for $f\in D(A_\delta)$, one has $A_\delta f=|f'|$ almost everywhere.
\end{proposition}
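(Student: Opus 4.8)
The plan is to handle the inclusion chain, the invariance statement, and the almost-everywhere identity separately, reducing everything to the results already available except for the two genuinely new facts $\BUC^1\subset D(A)$ and $A_\delta f=|f'|$ a.e.

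For the chain, the inclusions $D(A)\subset D(A_\delta)\subset D_L$ are Lemma~\ref{lem:domain}, and $D_L=D_L^s=W^{1,\infty}$ is Lemma~\ref{lem:robshift} together with the standard identification of bounded Lipschitz functions on $\R$ with $W^{1,\infty}$; so only $\BUC^1\subset D(A)$ needs an argument. Here I would fix $f\in\BUC^1$, let $\om$ be a nondecreasing modulus of continuity for $f'\in\BUC$, and write $f(x+y)-f(x)=f'(x)y+\int_0^y\bigl(f'(x+r)-f'(x)\bigr)\,dr$, so that $\bigl|f(x+y)-f(x)-f'(x)y\bigr|\le|y|\,\om(|y|)$. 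Taking the supremum over $|y|\le h$ (the extremal choices being $y=\pm h$) gives $\bigl|(S(h)f)(x)-f(x)-h|f'(x)|\bigr|\le h\,\om(h)$ uniformly in $x$, hence $\bigl\|\tfrac{S(h)f-f}{h}-|f'|\bigr\|_\infty\le\om(h)\to 0$ as $h\downarrow 0$; since $f'\in\BUC$ implies $|f'|\in\BUC$, this shows $f\in D(A)$ with $Af=|f'|$. The invariance statement $S(t)f\in W^{1,\infty}$ for $f\in W^{1,\infty}$ is then immediate, since $W^{1,\infty}=D_L$ and $D_L$ is $S(t)$-invariant by Theorem~\ref{generatormain2}(i).

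For the a.e.\ identity, fix $f\in D(A_\delta)\subset W^{1,\infty}$. Being Lipschitz, $f$ is differentiable on a set $N^c$ of full Lebesgue measure, with classical derivative agreeing a.e.\ with the weak derivative, and I would first show that at every $x\in N^c$,
\[
\lim_{h\downarrow 0}\frac{(S(h)f)(x)-f(x)}{h}=|f'(x)|.
\]
The lower bound follows from $\tfrac1h\bigl((S(h)f)(x)-f(x)\bigr)\ge\tfrac1h\max\{f(x+h)-f(x),\,f(x-h)-f(x)\}$, whose limit as $h\downarrow 0$ is $\max\{f'(x),-f'(x)\}=|f'(x)|$; the upper bound follows from differentiability at $x$, which for each $\ep>0$ and all small $h$ gives $f(x+y)-f(x)\le f'(x)y+\ep|y|\le(|f'(x)|+\ep)h$ whenever $|y|\le h$, so $\limsup_{h\downarrow 0}\tfrac1h\bigl((S(h)f)(x)-f(x)\bigr)\le|f'(x)|$. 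Then I would invoke Definition~\ref{def:dom2}: choose a null-sequence $(h_n)$ and an approximating sequence $(A_nf)$ in $\BUC$ with $\bigl\|\tfrac{S(h_n)f-f}{h_n}-A_nf\bigr\|_\infty\to 0$ and $A_nf\downarrow A_\delta f$, where the latter, read in $\mathcal L^\infty$, means decreasing with pointwise infimum $A_\delta f$. For $x\in N^c$ the approximation forces $A_nf(x)\to|f'(x)|$, while monotone convergence forces $A_nf(x)\to A_\delta f(x)$; uniqueness of limits yields $A_\delta f(x)=|f'(x)|$ for a.e.\ $x$.

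I expect the third part to be the only real obstacle. The two delicate points there are: first, obtaining the pointwise limit $\tfrac1h\bigl((S(h)f)(x)-f(x)\bigr)\to|f'(x)|$ for a merely Lipschitz $f$, where no uniform Taylor estimate is available as in the $\BUC^1$ case and one must squeeze between the two one-sided difference quotients; and second, correctly matching this scalar pointwise limit with the $X_\delta$-valued monotone object $A_\delta f$, i.e.\ recalling that $\downarrow$ in $\BUC_\delta$ is computed pointwise in $\mathcal L^\infty$, so that pointwise convergence of $A_nf$ is legitimately available. The rest is bookkeeping: $A_\delta f$ is upper semicontinuous whereas $|f'|$ is only an $L^\infty$ function, so the identity can only be asserted almost everywhere.
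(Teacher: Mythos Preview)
Your proposal is correct and follows essentially the same route as the paper: the chain of inclusions and the invariance are reduced to Lemma~\ref{lem:domain}, Lemma~\ref{lem:robshift}, and Theorem~\ref{generatormain2}(i); the inclusion $\BUC^1\subset D(A)$ is obtained via a first-order Taylor estimate (you make the remainder bound explicit through a modulus of continuity, the paper just cites Taylor's theorem); and the identification $A_\delta f=|f'|$ a.e.\ proceeds by computing the pointwise limit of the difference quotient at Rademacher points and matching it with the pointwise infimum that defines $A_\delta f$. Your treatment of the last step is in fact a bit more explicit than the paper's, which simply asserts $(A_\delta f)(x)=\lim_{h\downarrow 0}\tfrac{(S(h)f)(x)-f(x)}{h}$ for all $x$ without spelling out the approximating-sequence argument you give.
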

\begin{proof}
	If $f\in \BUC^1$, it follows from Taylor's theorem that
	\[
	\bigg\|\frac{S(h)f-f}{h}-|f^\prime| \bigg\|_\infty\to 0\quad\mbox{as }h\downarrow 0.
	\]	
	Hence, by Lemma \ref{lem:domain} and Lemma \ref{lem:robshift}, \[\BUC^1\subset D(A)\subset D(A_\delta)\subset D_L=D_L^s=W^{1,\infty}.\]
	In particular, $W^{1,\infty}$ is invariant under the uncertain shift semigroup by Theorem \ref{generatormain2}.
	
	Let $f\in W^{1,\infty}$.
	By Rademacher's theorem the function $f$ is differentiable almost everywhere. If $f$ is differentiable at $x$, then
	\begin{align*}
	\lim_{h\downarrow 0}\frac{\big(S(h)f\big)(x)-f(x)}{h}&=\lim_{h\downarrow 0}\sup_{|y|\le h}\frac{f(x+y)-f(x)}{h}\\&=\lim_{h\downarrow 0}\sup_{|y|=h}\frac{f(x+y)-f(y)}{h}=|f^\prime(x)|.
	\end{align*}
	Since, for $f\in D(A_\delta)$, one has
	\[
	\big(A_\delta f\big)(x)=\lim_{h\downarrow 0}\frac{\big(S(h)f\big)(x)-f(x)}{h}
	\]
	for all $x\in \mathbb{R}^d$, we conclude that $A_\delta f=|f^\prime|$ almost everywhere. Here, $f^\prime$ is understood as the weak derivative in $L^\infty$.
\end{proof}

The following example shows that, in general, $D(A)$ is not invariant under the semigroup $(S(t))_{t\geq 0}$.

\begin{example}\label{4.3}
 Consider the case $G=\R$, and let $f\in \BUC^1$ with
 \[
  f(x)=\begin{cases}
         x^2, & x\in [0,2],\\
         x^4, & x\in [-2,0).
        \end{cases}
 \]
 Then, by Proposition \ref{prop:firstorder}, $S(1)f\in D(A_\delta)\subset W^{1,\infty}$ with $A_\delta S(1)f=\big|\big(S(1)f\big)'\big|$. By definition of $S(1)$,
 \[
    \big(S(1)f\big)(x)=\begin{cases}
         (x+1)^2, & x\in [0,1],\\
         (x-1)^4, & x\in [-1,0),
        \end{cases}
 \]
 which implies that
  \[
  \big(S(1)f\big)'(x)=\begin{cases}
         2(x+1), & x\in (0,1),\\
         4(x-1)^3, & x\in (-1,0).
        \end{cases}
 \]
 Therefore, $A_\delta S(1)f=\big|\big(S(1)f\big)'\big|\notin \BUC$ and, in particular, $S(1)f\notin D(A)$.
\end{example}

\subsection{The $G$-expectation}\label{sec:G}
Let $0\le\underline{\sigma}\le\overline{\sigma}$. We consider the $G$-expectation on $\BUC=\BUC(\mathbb{R})$, which corresponds to the sublinear semigroup
\[
\big(S(t)f\big)(x):=\sup_{\sigma\in\Sigma}\mathbb{E}\Big[f\big(x+\int_{0}^t \sigma_s\,dW_s\big)\Big]\quad \text{for }f\in\BUC,\; x\in G\text{ and }t\geq 0,
\]
where $W$ is a Brownian motion on a filtered probability space $(\Omega,\mathcal{F},(\mathcal{F}_t),\mathbb{P})$ and $\Sigma$ denotes the set of all progressively measurable processes with values in $[\underline{\sigma},\overline{\sigma}]$, see e.g.~\cite{MR2754968} and  \cite{peng2010nonlinear} for an overview on $G$-expectations. Note that we do not assume that $\underline{\sigma}>0$, which is a standard assumption in PDE theory for obtaining regularity results in H\"older spaces (cf.\ Lieberman \cite[Chapter XIV]{MR1465184} and Peng \cite[Appendix C, §4]{peng2010nonlinear} for a short survey).
An inspection shows that $S$ is a translation invariant sublinear
$C_0$-semigroup on $\BUC$ which is continuous from above. Moreover, an application of It\^o's formula leads to
\begin{equation}\label{eq.genGsemigroup}
\lim_{h\downarrow 0}\frac{S(h)f-f}{h}=\tfrac{1}{2}\max\big\{\underline{\sigma}^2f^{\prime\prime},\overline{\sigma}^2f^{\prime\prime}\big\}\quad\mbox{for all }f\in \BUC^2,
\end{equation}
 where $\BUC^2$ denotes the set of all $f\in \BUC^1$ with first derivative $f'\in \BUC^1$. In particular, $\BUC^2\subset D(A)$. We next determine the symmetric Lipschitz set.
\begin{lemma}\label{lem:LipsG}
 The symmetric Lipschitz set is given by  $D^s_L= W^{2,\infty}$. 
\end{lemma}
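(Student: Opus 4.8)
The plan is to prove the two inclusions $D_L^s \subset W^{2,\infty}$ and $W^{2,\infty}\subset D_L^s$ separately, exploiting translation invariance of the $G$-semigroup together with the dual representation via the set $\Sigma$ of admissible volatilities. Throughout I would fix a constant $\underline\sigma$-versus-$\overline\sigma$ dichotomy but aim to treat both the degenerate case $\underline\sigma=0$ and the nondegenerate case in one stroke; the key quantitative input is that for $f\in\BUC^2$ one has the two-sided estimate $\big|S(h)f-f\big|\le \tfrac{h}{2}\overline\sigma^2\|f''\|_\infty$ and $\big|S(h)(-f)+f\big|\le \tfrac{h}{2}\overline\sigma^2\|f''\|_\infty$, which follows from It\^o's formula and the fact that under each $\sigma\in\Sigma$ the process $x+\int_0^\cdot\sigma_s\,dW_s$ is a martingale, so $\mathbb E[f(x+\int_0^h\sigma\,dW)] - f(x) = \tfrac12\mathbb E\int_0^h \sigma_s^2 f''(\cdot)\,ds$, uniformly bounded by $\tfrac{h}{2}\overline\sigma^2\|f''\|_\infty$.

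For the inclusion $W^{2,\infty}\subset D_L^s$: given $f\in W^{2,\infty}$, I would mollify, writing $f_\nu := f * \rho_\nu$ for a standard mollifier $\rho_\nu$, so that $f_\nu\in\BUC^2$ with $\|f_\nu''\|_\infty \le \|f''\|_\infty$ (the weak second derivative is in $L^\infty$, and convolution does not increase the sup norm). By the estimate above, $\|S(h)f_\nu - f_\nu\|_\infty \le \tfrac{h}{2}\overline\sigma^2\|f''\|_\infty$ and likewise for $-f_\nu$, so $f_\nu\in D_L^s(C,h_0)$ with $C=\tfrac12\overline\sigma^2\|f''\|_\infty$ uniformly in $\nu$; this is exactly the stability observed in Remark \ref{prop:approx} (mollification is averaging against a probability measure). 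Since $f_\nu\to f$ uniformly and each $S(h)$ is continuous (bounded sublinear, hence Lipschitz by Lemma \ref{cor:Lip}), passing to the limit $\nu\to\infty$ in $\|S(h)f_\nu-f_\nu\|_\infty\le Ch$ and $\|S(h)(-f_\nu)+f_\nu\|_\infty\le Ch$ yields the same bounds for $f$, so $f\in D_L^s$.

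For the reverse inclusion $D_L^s\subset W^{2,\infty}$: let $f\in D_L^s$, so there are $C\ge 0$, $h_0>0$ with $\|S(h)f-f\|_\infty\le Ch$ and $\|S(h)(-f)+f\|_\infty\le Ch$ for $h\in[0,h_0]$. By translation invariance it suffices to control a discrete second difference of $f$ at the origin. The idea is to choose, for small $h>0$, the constant volatility $\sigma\equiv\overline\sigma$ (or, in the degenerate case, a volatility supported on a short sub-interval), so that $S(h)f(x)\ge \mathbb E[f(x+\overline\sigma W_h)]$, and similarly $-S(h)(-f)(x)\le \mathbb E[f(x+\overline\sigma W_h)]$; subtracting $f(x)$ and using that $\mathbb E[W_h]=0$, the Lipschitz-in-$h$ bounds force $\big|\mathbb E[f(x+\overline\sigma W_h)] - f(x)\big|\le Ch$ for all $x$. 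Writing this out, $\mathbb E[f(x+\overline\sigma W_h)]-f(x) = \int_{\R}\big(f(x+\overline\sigma\sqrt h\,z)-f(x)\big)\varphi(z)\,dz$ with $\varphi$ the standard Gaussian density, and rescaling shows that the second-difference quotients of $f$ against a Gaussian kernel are bounded by a constant times $\|f''\|$-type quantity; more precisely one deduces $\sup_{x}\big|f(x+r)-2f(x)+f(x-r)\big|\le c\,C\,r^2$ for all small $r$ (symmetrizing in $W_h\mapsto -W_h$), which is the classical characterization of $W^{2,\infty}$: a function whose second symmetric differences are $O(r^2)$ lies in $W^{2,\infty}$ with $\|f''\|_\infty\le c C$.

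The main obstacle I anticipate is the degenerate case $\underline\sigma=0$ in the reverse inclusion: when $\underline\sigma$ could be $0$ one must be careful that $-S(h)(-f)(x)=\inf_{\sigma\in\Sigma}\mathbb E[f(x+\int_0^h\sigma_s dW_s)]$ can be attained (or approached) by $\sigma\equiv 0$, giving the trivial lower bound $f(x)$; so the subtraction $S(h)f - (-S(h)(-f))$ does not immediately isolate the Gaussian average unless $\underline\sigma>0$. The fix is to not compare $S(h)f$ with $-S(h)(-f)$ but rather to directly use $S(h)f(x)\ge \mathbb E[f(x+\overline\sigma W_h)]$ together with $S(h)f(x)\le f(x)+Ch$, which already yields the upper bound $\mathbb E[f(x+\overline\sigma W_h)]\le f(x)+Ch$; then apply the same to $-f$ using $\|S(h)(-f)+f\|_\infty\le Ch$ to get $\mathbb E[f(x+\overline\sigma W_h)]\ge f(x)-Ch$ — note $\mathbb E[(-f)(x+\overline\sigma W_h)]\le S(h)(-f)(x)\le -f(x)+Ch$. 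Thus both bounds survive regardless of whether $\underline\sigma$ vanishes, and the characterization argument goes through with $\overline\sigma$ playing the role of the relevant scale; one then has to invoke a standard lemma (e.g. via the Fourier transform, or via testing against $C_c^\infty$ functions and the fact that Gaussian second differences control $\|f''\|$) to conclude $f\in W^{2,\infty}$, which I would isolate as the one genuinely technical point of the proof.
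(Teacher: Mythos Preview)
Your argument for $W^{2,\infty}\subset D_L^s$ matches the paper's: mollify, apply It\^o uniformly in the mollification parameter, pass to the limit (the paper uses dominated convergence inside the expectation, you use continuity of $S(h)$; both work).

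For $D_L^s\subset W^{2,\infty}$ you take a genuinely different route. The paper mollifies $f$ with a Gaussian $\nu_\delta$, invokes Remark~\ref{prop:approx} (stability of $D_L^s(C,h_0)$ under averaging) to get $f_\delta\in D_L^s(C,h_0)$ with the \emph{same} constants, and then applies the nonlinear generator formula \eqref{eq.genGsemigroup} on $\BUC^2$ to obtain $\|f_\delta''\|_\infty\le 2C/\overline\sigma^2$ uniformly in $\delta$; a weak-$*$ compactness argument in $L^\infty$ then produces the weak second derivative of $f$. You instead fix the constant control $\sigma\equiv\overline\sigma$ to reduce immediately to the \emph{linear} heat semigroup bound $\|P_h^{\overline\sigma}f-f\|_\infty\le Ch$ (and likewise for $-f$), and then appeal to the identification of the heat-semigroup Favard class with $W^{2,\infty}$. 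This is a legitimate and somewhat more conceptual reduction---it sidesteps Remark~\ref{prop:approx} and the nonlinear generator formula entirely---and your handling of the degenerate case $\underline\sigma=0$ is correct: only the choice $\sigma\equiv\overline\sigma$ is needed for both inequalities.

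One point to tighten: the intermediate claim that from $\big|\mathbb{E}[f(x+\overline\sigma W_h)]-f(x)\big|\le Ch$ one ``deduces $\sup_x|f(x+r)-2f(x)+f(x-r)|\le cCr^2$'' is not immediate---a Gaussian-weighted average of second differences being $O(h)$ does not by itself yield a pointwise $O(r^2)$ bound. The standard way to finish (and what your suggested ``testing against $C_c^\infty$'' would unwind to) is to mollify, bound $f_\delta''$ uniformly via $(P_h^{\overline\sigma}f_\delta-f_\delta)/h\to \tfrac{\overline\sigma^2}{2}f_\delta''$, and pass to a weak limit. That is precisely the paper's mechanism, only applied to the linear $P_h^{\overline\sigma}$ rather than to $S(h)$. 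So your deferred ``standard lemma'' and the paper's proof coincide at the technical core; the real difference is whether one mollifies before or after reducing from the nonlinear semigroup to the linear one.
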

\begin{proof}
First, we show that $D^s_L\subset  W^{2,\infty}$. To that end, fix $f\in D^s_L$. By definition of the symmetric Lipschitz set, there exist $C>0$ and $h_0>0$ such that $f\in D^s_L(C,h_0)$. For every $\delta>0$, we define $f_\delta(x):=\int_{\mathbb{R}} f(x+y)\,\nu_\delta({\rm d}y)$, where $\nu_\delta$ is the normal distribution $\mathcal{N}(0,\delta)$ with mean zero and variance $\delta$.
Then, it holds $f_\delta\in\BUC^2$ for all $\delta>0$, and $\|f_\delta-f\|_\infty\to 0$ as $\delta\downarrow 0$. It follows from Remark~\ref{prop:approx} that
\[S(h) f_\delta- f_\delta \le Ch\quad\mbox{and}\quad -S(h) (-f_\delta)- f_\delta \ge -Ch\] for all $h\in[0,h_0]$
and $\delta>0$. Hence, letting $h\downarrow 0$, we obtain
\[\frac{1}{2}\overline{\sigma}^2f^{\prime\prime}_\delta\le C\quad\mbox{and}\quad \frac{1}{2}\overline{\sigma}^2f^{\prime\prime}_\delta\ge -C.\]
This shows that $\|f^{\prime\prime}_\delta\|_\infty$ is uniformly bounded in $\delta>0$. Hence, there exists a sequence $\delta_n\downarrow 0$ such that $\int_x^y f^{\prime\prime}_{\delta_n}(z)-g(z)\,{\rm d}z\to 0$ for all $x,y\in\mathbb{R}$ with $x<y$ and some $g\in L^\infty$. By the dominated convergence theorem, we obtain
\begin{align*}
	\frac{f(x+h)-f(x)}{h}&=\lim_{n\to\infty}\bigg(\frac{f_{\delta_n}(x+h)-f_{\delta_n}(x)}{h}\bigg)\\
	&=\lim_{n\to\infty}\bigg( f_{\delta_n}^\prime(x)+\frac{1}{h}\int_x^{x+h}\int_x^y f_{\delta_n}^{\prime\prime}(z)\,{\rm d}z\,{\rm d}y\bigg)\\
	&=\lim_{n\to\infty}  f^\prime_{\delta_n}(x)+\frac{1}{h}\int_x^{x+h}\int_x^y g(z)\,{\rm d}z\,{\rm d}y
\end{align*}
for all $x\in \R$ and $h>0$. Since $\lim_{h\downarrow 0}\tfrac{1}{h}\int_x^{x+h}\int_x^y g(z)\,{\rm d}z\,{\rm d}y\to 0$ for all $x\in\mathbb{R}$, we conclude that $f$ is differentiable with $f^\prime(x)=\lim_{n\to\infty} f^\prime_{\delta_n}(x)$ and second weak derivative $f^{\prime\prime}=g$.
For the special choice $h=1$, we observe that $f^\prime\in L^\infty$. This shows that $f\in W^{2,\infty}$. 

Second, we show that  $W^{2,\infty}\subset D^s_L$. Fix $f\in W^{2,\infty}$ and set $f_n:=\int_{\mathbb{R}} f(\cdot+y)\,\nu_{\delta_n}({\rm d}y)$ for all $n\in\mathbb{N}$.
Then, $f_n\in \BUC^2$ with $\|f^{\prime}_n\|_\infty\le \|f^{\prime}\|_\infty$ and $\|f^{\prime\prime}_n\|_\infty\le \|f^{\prime\prime}\|_\infty$ for all $n\in\mathbb{N}$.
For all $\sigma\in \Sigma$, $x\in\mathbb{R}$, and $n\in\mathbb{N}$, it follows from  It\^o's formula that
\[
\mathbb{E}\Bigg[f_n\bigg(x+\int_{0}^h \sigma_s\,dW_s\bigg)\Bigg]-f_n(x)\le\tfrac{1}{2}\|f^{\prime\prime}\|_\infty\overline{\sigma}^2 h.
\]
Since $f_n\to f$ pointwise and $\|f_n\|_\infty\le\|f\|_{\infty}$, we obtain from the dominated convergence theorem that 
\[
\mathbb{E}\Bigg[f\bigg(x+\int_{0}^h \sigma_s\,dW_s\bigg)\Bigg]-f(x)\le\tfrac{1}{2}\|f^{\prime\prime}\|_\infty\overline{\sigma}^2 h.
\]
Taking the supremum over all $\sigma\in \Sigma$, replacing $f$ by $-f$, and using the subadditivity of $S(h)$ yield
\[
S(h)f-f\le \tfrac{1}{2}\|f^{\prime\prime}\|_\infty\overline{\sigma}^2 h\quad\mbox{and}\quad -S(h)f+f\le  S(h)(-f)+f\le \tfrac{1}{2}\|f^{\prime\prime}\|_\infty\overline{\sigma}^2 h.
\]
Combining the previous two estimates implies that $f\in D_L^s$. 
\end{proof}	

\mnen{The previous result can be seen as a regularity result, ensuring that $W^{2,\infty}$-regularity of the initial value is preserved under the semigroup. The following proposition shows that the equality \eqref{eq.genGsemigroup} extends from $\BUC^2$ to $W^{2,\infty}$ in a pointwise sense almost everywhere.} 

\begin{proposition}
Let $f\in W^{2,\infty}$. Then, $S(t)f\in W^{2,\infty}$ for all $t\ge 0$. Moreover,
\begin{equation*}
\lim_{h\downarrow 0}\frac{S(h)f-f}{h}=\tfrac{1}{2}\max\big\{\underline{\sigma}^2f^{\prime\prime},\overline{\sigma}^2f^{\prime\prime}\big\}\quad\mbox{almost everywhere},
\end{equation*}
where the limit on the left-hand side exists pointwise almost everywhere.
\end{proposition}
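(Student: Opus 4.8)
The two assertions are essentially independent, so I would prove them separately.

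For the invariance $S(t)f\in W^{2,\infty}$, the plan is to combine Lemma~\ref{lem:LipsG}, which identifies $W^{2,\infty}$ with the symmetric Lipschitz set $D_L^s$, with Proposition~\ref{thm:domsym}. Since $S$ is translation invariant, sublinear and monotone, and admits the dual representation $(S(t)f)(x)=\sup_{\mu\in\mathcal{P}_t}\int_{\R}f(x+y)\,\mu(dy)$ with $\mathcal{P}_t$ the (closed convex hull of the) set of laws of $\int_0^t\sigma_s\,dW_s$, $\sigma\in\Sigma$, the Fubini computation of the Example following Proposition~\ref{thm:domsym} yields condition~\eqref{eq:condsymlip}; hence $D_L^s=W^{2,\infty}$ is $S(t)$-invariant for all $t\ge 0$. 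Alternatively one can avoid Proposition~\ref{thm:domsym} and rerun the mollification argument from the proof of Lemma~\ref{lem:LipsG} on $S(t)f$ directly, using that each $S(t)$ is a sup-norm contraction so that the sets $D_L^s(C,h_0)$ are closed under uniform limits; but invoking the already established invariance of $D_L^s$ is shorter.

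For the pointwise formula, the starting point is that every $f\in W^{2,\infty}$ has a second-order Taylor expansion at almost every point: writing $g:=f'$ for the Lipschitz representative of the first derivative, $g$ is differentiable at a.e.\ $x_0$ with $g'(x_0)=f''(x_0)$, and integrating $g(x_0+t)=g(x_0)+f''(x_0)t+o(t)$ gives
\[
f(x_0+y)=f(x_0)+f'(x_0)y+\tfrac12 f''(x_0)y^2+R(x_0,y),\qquad R(x_0,y)=o(y^2)\text{ as }y\to 0 .
\]
Here for every $\varepsilon>0$ there is $\delta>0$ with $|R(x_0,y)|\le\varepsilon y^2$ for $|y|\le\delta$, while the global Lipschitz bounds on $f$ and $f'$ give $|R(x_0,y)|\le C_0 y^2$ for all $y$; combining the two, $|R(x_0,y)|\le\varepsilon y^2+(C_0/\delta^2)y^4$ for all $y\in\R$. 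I would then fix such a Taylor point $x_0$, and for $\sigma\in\Sigma$ put $Z_h^\sigma:=\int_0^h\sigma_s\,dW_s$, so that $\E[Z_h^\sigma]=0$, $\E[(Z_h^\sigma)^2]=\E\!\int_0^h\sigma_s^2\,ds\in[\underline{\sigma}^2 h,\overline{\sigma}^2 h]$, and $\E[(Z_h^\sigma)^4]\le C\overline{\sigma}^4 h^2$, all uniformly in $\sigma\in\Sigma$ (the last by the Burkholder--Davis--Gundy inequality, or a direct $L^4$-estimate for stochastic integrals of bounded integrands). Inserting the Taylor expansion into $\E[f(x_0+Z_h^\sigma)]-f(x_0)$ and using these moment bounds gives
\[
\Big|\tfrac1h\big(\E[f(x_0+Z_h^\sigma)]-f(x_0)\big)-\tfrac12 f''(x_0)\,\tfrac1h\E\!\int_0^h\sigma_s^2\,ds\Big|\le\varepsilon\overline{\sigma}^2+\tfrac{C\overline{\sigma}^4}{\delta^2}h
\]
uniformly in $\sigma$. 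Since $\tfrac12 f''(x_0)\,\tfrac1h\E\int_0^h\sigma_s^2\,ds\le\tfrac12\max\{\underline{\sigma}^2 f''(x_0),\overline{\sigma}^2 f''(x_0)\}$ always, with equality attained by a constant control, taking the supremum over $\sigma\in\Sigma$ and recalling $(S(h)f)(x_0)=\sup_{\sigma}\E[f(x_0+Z_h^\sigma)]$ yields
\[
\Big|\tfrac1h\big((S(h)f)(x_0)-f(x_0)\big)-\tfrac12\max\{\underline{\sigma}^2 f''(x_0),\overline{\sigma}^2 f''(x_0)\}\Big|\le\varepsilon\overline{\sigma}^2+\tfrac{C\overline{\sigma}^4}{\delta^2}h .
\]
Letting $h\downarrow0$ and then $\varepsilon\downarrow0$ shows that the limit exists at $x_0$ and equals $\tfrac12\max\{\underline{\sigma}^2 f''(x_0),\overline{\sigma}^2 f''(x_0)\}$, hence for a.e.\ $x\in\R$.

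The main obstacle is the uniform-in-$\sigma$ control of the Taylor remainder: because $\sigma\in\Sigma$ is only progressively measurable, the increment $Z_h^\sigma$ cannot be localized pathwise, and one must rely on the $\sigma$-uniform moment estimates $\E[(Z_h^\sigma)^{2k}]\le C_k\overline{\sigma}^{2k}h^k$ to absorb the quartic part of $R(x_0,\cdot)$ into an $O(h)$-error. Everything else is routine: the a.e.\ second-order differentiability of one-variable $W^{2,\infty}$ functions, the martingale property $\E[Z_h^\sigma]=0$, and the interchange of $\sup_{\sigma}$ with the leading term, which is affine in $\E\!\int_0^h\sigma_s^2\,ds$.
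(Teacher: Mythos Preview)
Your proposal is correct and follows the same approach as the paper: invariance of $W^{2,\infty}$ via Lemma~\ref{lem:LipsG} and Proposition~\ref{thm:domsym}, and the pointwise formula via the a.e.\ second-order Taylor expansion of $f$ combined with $\sigma$-uniform moment bounds on $Z_h^\sigma=\int_0^h\sigma_s\,dW_s$. The only difference is cosmetic: to control the Taylor remainder the paper truncates at $\{|Z_h^\sigma|\ge\delta\}$ and applies Cauchy--Schwarz plus Chebyshev (yielding an $O(\sqrt{h})$ error), whereas you use the global bound $|R(y)|\le\varepsilon y^2+(C_0/\delta^2)y^4$ together with a direct fourth-moment estimate (yielding an $O(h)$ error), which is slightly cleaner but equivalent for the conclusion.
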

\begin{proof}
Fix $f\in W^{2,\infty}$. As an application of Proposition \ref{thm:domsym} and Lemma \ref{lem:LipsG}, we obtain that $D^s_L=W^{2,\infty}$ is invariant under the semigroup $S$, i.e., $S(t)f\in W^{2,\infty}$ for all $t\ge 0$.

For $\sigma\in \Sigma$, we consider the stochastic integral $X_{\sigma,t}:=\int_0^t \sigma_s\,dW_s$ for all $t\ge 0$. Since $f\in W^{2,\infty}$, it holds $f^\prime\in \Lip_b$ and, by Rademacher's theorem, the function $f^\prime$ is differentiable almost everywhere, and the pointwise derivative coincides with the weak derivative $f''\in L^\infty$ almost everywhere. 
Suppose that $f^{\prime}$ is differentiable at $x\in \R$  with $|f''(x)|\leq \|f''\|_{\infty}$, so that $f(x+\xi)=f(x)+f^\prime(x)\xi+\tfrac{1}{2}f^{\prime\prime}(x)\xi^2+o(\xi^2)$ as $|\xi|\to 0$.\footnote{Indeed, since $f^{\prime}$ is differentiable at $x$, it holds $\sup_{0\le\tau\le 1} |f^\prime(x+\tau \xi)-f^\prime(x)-\tau \xi f^{\prime\prime}(x)|=o(\xi)$. Integrating w.r.t.~$\tau$ yields $\big|\tfrac{f(x+\xi)-f(x)}{\xi}-f^{\prime}(x)-\tfrac{1}{2}\xi f^{\prime\prime}(x)\big|\le \int_0^1 |f^\prime(x+\tau \xi)-f^\prime(x)-\tau \xi f^{\prime\prime}(x)|\,{\rm d}\tau=o(\xi)$.} Then, we obtain for every $h>0$,
\begin{align*}
	&\Bigg| \frac{\big(S(h)f\big)(x)-f(x)}{h}-\frac{1}{2}\max\Big\{\underline{\sigma}^2f^{\prime\prime}(x),\overline{\sigma}^2f^{\prime\prime}(x)\Big\}\Bigg| \\
	&\qquad = 	\Bigg| \frac{\big(S(h)f\big)(x)-f(x)}{h}-\sup_{\sigma\in \Sigma}\frac{1}{2}f^{\prime\prime}(x)\mathbb{E}\bigg[\frac{X^2_{\sigma,h}}{h}\bigg]\Bigg| \\
	&\qquad \le \sup_{\sigma\in \Sigma} \Bigg| \frac{1}{h}\mathbb{E}\Big[f\big(x+X_{\sigma,h}\big)-f(x)-\frac{1}{2}f^{\prime\prime}(x) X^2_{\sigma,h} \Big] \Bigg|\\
	&\qquad= \sup_{\sigma\in \Sigma} \bigg| \frac{1}{h}\mathbb{E}\big[R(X_{\sigma,h}) \big] \bigg|\quad \mbox{with } R(\xi):=f(x+\xi)-f(x)-f^\prime(x)\xi-\tfrac{1}{2}f^{\prime\prime}(x)\xi^2.
\end{align*}
Note that $(X_{\sigma, t})_{t\geq 0}$ is a martingale, so that
$\mathbb{E}[X_{\sigma,t}]=0$ for all $t\geq 0$. Since the first derivative $f'$ of $f$ is Lipschitz with Lipschitz constant $\|f''\|_\infty$ and $|f''(x)|\leq \|f''\|_{\infty}$, it follows that $$|R(\xi)|\leq \int_0^1 \big|\big(f^\prime(x+\tau \xi)-f^\prime(x)\big)\xi-\tau \xi^2 f^{\prime\prime}(x)\big|\,{\rm d}\tau \le 2\|f^{\prime\prime}\|_\infty \xi^2\quad\text{for all }\xi\in \R,$$ and the Burkholder-Davis-Gundy inequality implies that there exists a constant $C>0$, such that
\begin{align*}
	\sup_{\sigma\in \Sigma} \bigg|\frac{1}{h}\mathbb{E}\big[R(X_{\sigma,h})\boldsymbol 1_{\{|X_{\sigma,h}|\ge\delta\}}\big]\bigg|&\le \sup_{\sigma\in \Sigma}\frac{1}{h}\mathbb{E}\big[R(X_{\sigma,h})^2\big]^{1/2}\cdot \mathbb{P}\big(|X_{\sigma,h}|\ge\delta\big)^{1/2}\\
	&\le\frac{1}{h}\cdot C \overline{\sigma}^2 h \cdot \frac{\overline{\sigma}\sqrt{h}}{\delta}\le \frac{C \overline{\sigma}^3\sqrt{h}}{\delta}\quad \text{for all }\delta >0.
\end{align*}
Let $\ep>0$. Since $R(\xi)=o(\xi^2)$, there exists some $\delta>0$ such that $ R(\xi)\leq \ep \xi^2$ for $|\xi|< \delta$. Hence,
\[
\sup_{\sigma\in \Sigma} \bigg|\frac{1}{h}\mathbb{E}\big[R(X_{\sigma,h})\big]\bigg|\le \frac{C \overline{\sigma}^3\sqrt{h}}{\delta} + \sup_{\sigma\in \Sigma} \bigg|\frac{1}{h}\mathbb{E}\big[R(X_{\sigma,h})\boldsymbol 1_{\{|X_{\sigma,h}|<\delta\}}\big]\bigg|\le \frac{C \overline{\sigma}^3\sqrt{h}}{\delta} + \ep \overline \sigma ^2.
\]
Letting $h\downarrow 0$, this shows that $\tfrac{(S(h)f)(x)-f(x)}{h}\to \tfrac{1}{2}\max\{\underline{\sigma}^2f^{\prime\prime}(x),\overline{\sigma}^2f^{\prime\prime}(x)\}$. The proof is complete. 
\end{proof}

\begin{appendix}

\section{Some auxiliary results}\label{sec:convexsemigroup}
In this section, we list some basic properties for convex operators and semigroups, which can be found, for example, in \cite{dkn3}.

\begin{lemma}[{\cite[Corollary A.4]{dkn3}}]\label{cor:Lip}
Let $S\colon X\to X$ be a bounded and convex operator. Then, $S$ is Lipschitz on bounded subsets, i.e., for every $r>0$, there exists some $L>0$ such that $\|Sx-Sy\|\leq L \|x-y\|$ for all $x,y\in B(0,r)$.
\end{lemma}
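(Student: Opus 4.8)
The plan is to exploit the defining convexity inequality $S(\la x+(1-\la)y)\le \la Sx+(1-\la)Sy$ along the line through two points $x,y\in B(0,r)$, comparing the increment $Sy-Sx$ with the values of $S$ at auxiliary points pushed out to a slightly larger ball, on which $S$ is still norm-bounded by hypothesis. Concretely, I would fix $r>0$, set $R:=2r$, and for distinct $x,y\in B(0,r)$ write $u:=(y-x)/\|y-x\|$ and introduce the two points
\[
 z:=y+(R-r)u,\qquad z':=x-(R-r)u,
\]
both of which lie in $B(0,R)$ since $\|x\|,\|y\|\le r$. The purpose of this construction is that $y$ lies on the segment $[x,z]$ and $x$ lies on the segment $[y,z']$, with the same barycentric weight $t:=\|x-y\|/(\|x-y\|+R-r)$ in both cases; one checks directly that $y=(1-t)x+tz$ and $x=(1-t)y+tz'$.

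Next I would apply convexity twice. The identity $y=(1-t)x+tz$ gives $Sy\le(1-t)Sx+tSz$, that is,
\[
 Sy-Sx\le t\,(Sz-Sx),
\]
an upper order bound; the identity $x=(1-t)y+tz'$ gives $Sx\le(1-t)Sy+tSz'$, hence the lower order bound $Sy-Sx\ge t\,(Sy-Sz')$. These two inequalities sandwich the increment $Sy-Sx$ between two elements of $X$.

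The key step is then to turn these one-sided order inequalities into a genuine norm estimate, using that $X$ is a Banach lattice. Setting $b:=t\,|Sz-Sx|$ and $a:=t\,|Sy-Sz'|$, both nonnegative, the two bounds give $-a\le Sy-Sx\le b$, whence $|Sy-Sx|\le a\vee b\le a+b$, and, since the lattice norm is monotone,
\[
 \|Sy-Sx\|\le\|a\|+\|b\|=t\big(\|Sy-Sz'\|+\|Sz-Sx\|\big).
\]
Because $x,y,z,z'\in B(0,R)$, each of the two norms on the right is at most $2\|S\|_R<\infty$, and since $t\le\|x-y\|/(R-r)=\|x-y\|/r$, this yields $\|Sy-Sx\|\le(4\|S\|_{2r}/r)\,\|x-y\|$, i.e.\ the claimed Lipschitz bound with $L:=4\|S\|_{2r}/r$.

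I expect the main obstacle to be precisely this passage from order to norm: convexity only delivers one-sided inequalities in the lattice order, so one must run the line-extension argument in both directions to sandwich the increment, and then invoke the lattice property that $-a\le w\le b$ with $a,b\ge 0$ forces $|w|\le a\vee b$ and hence $\|w\|\le\|a\|+\|b\|$. The geometric bookkeeping (verifying $z,z'\in B(0,R)$ and identifying the common weight $t$) is routine, as is the final arithmetic; the conceptual content lies in the two-sided use of convexity combined with monotonicity of the lattice norm, and in observing that boundedness of $S$ on $B(0,R)$ is exactly what keeps $\|Sz-Sx\|$ and $\|Sy-Sz'\|$ under control.
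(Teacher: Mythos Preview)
Your argument is correct: the two-sided line-extension trick, applying convexity once with $y$ on the segment $[x,z]$ and once with $x$ on the segment $[y,z']$, gives the order sandwich $-a\le Sy-Sx\le b$, and the Banach lattice identity $|w|=w^+\vee w^-$ for disjoint positive and negative parts then yields $|Sy-Sx|\le a\vee b\le a+b$, from which the norm bound follows with $L=4\|S\|_{2r}/r$. The geometric verifications ($z,z'\in B(0,2r)$ and the common barycentric weight $t$) are all in order.

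Note, however, that the paper does not supply its own proof of this lemma; it is quoted verbatim as \cite[Corollary~A.4]{dkn3} and listed among auxiliary results in the appendix without argument. Your proof is therefore a self-contained substitute for that external reference. The approach you take---pushing out along the line through $x$ and $y$ to a larger ball and using convexity twice to trap the increment---is the standard device for proving local Lipschitz continuity of convex maps, and is almost certainly the method used in the cited source as well; the only place where the lattice structure genuinely enters is the passage from the two one-sided order inequalities to a norm estimate, which you handle correctly.
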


For the remainder of this subsection, let $S$ be a convex $C_0$-semigroup on $X$.
\begin{lemma}[{\cite[Corollary 2.4]{dkn3}}]\label{loclipschitz}
 Let $T>0$ and $x_0\in X$. Then, there exist $L\geq 0$ and $r>0$ such that
 \[
  \sup_{t\in [0,T]}\|S(t)y-S(t)z\|\leq L\|y-z\|
 \]
 for all $y,z\in B(x_0,r)$.
\end{lemma}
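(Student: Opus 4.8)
The statement asserts that the fixed-time local Lipschitz property of Lemma~\ref{cor:Lip} can be made uniform over $t\in[0,T]$. The plan is to reduce the claim to a uniform boundedness statement for the orbits, namely that there are $r>0$ and $C^\ast<\infty$ with
\[
\sup_{t\in[0,T]}\sup_{z\in B(x_0,r)}\|S(t)z\|\le C^\ast,
\]
and then to run the convexity argument behind Lemma~\ref{cor:Lip} while carrying $C^\ast$, observing that the resulting Lipschitz constant depends only on $C^\ast$ and $r$ and hence not on $t$.

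First I would establish that the orbit of each \emph{single} point is bounded in time, i.e.\ $M(x):=\sup_{t\in[0,T]}\|S(t)x\|<\infty$ for every $x\in X$. By strong continuity (S3) there is $\delta_x>0$ with $\|S(t)x-x\|\le 1$, and thus $\|S(t)x\|\le\|x\|+1$, for all $t\in[0,\delta_x]$. Writing an arbitrary $t\in[0,T]$ as $t=k\delta_x+s$ with $s\in[0,\delta_x)$ and $k\le\lceil T/\delta_x\rceil$, the semigroup property (S2) gives $S(t)x=S(\delta_x)^k\big(S(s)x\big)$; since each $S(\delta_x)$ is a bounded operator it maps balls into balls, so iterating at most $\lceil T/\delta_x\rceil$ times keeps the orbit in a fixed bounded set, whence $M(x)<\infty$.

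Next I would upgrade this to local uniformity via Baire's theorem. The sets $H_n:=\{x\in X:\sup_{t\in[0,T]}\|S(t)x\|\le n\}$ are closed, because each $S(t)$ is continuous by Lemma~\ref{cor:Lip}, so that $\|S(t)x_k\|\le n$ passes to the limit along $x_k\to x$ for every fixed $t$; and $\bigcup_n H_n=X$ by the previous step. As $X$ is complete, some $H_N$ contains a ball $B(y_0,\rho)$, giving $\sup_{t\in[0,T]}\sup_{z\in B(y_0,\rho)}\|S(t)z\|\le N$. To transport this bound to the prescribed centre $x_0$, I would use convexity: fixing $s>1$ and setting $r:=\rho/s$, for $z\in B(x_0,r)$ the point $Tz:=y_0+s(z-x_0)$ lies in $B(y_0,\rho)$, and with the fixed point $q:=\tfrac{sx_0-y_0}{s-1}$ one checks $z=\tfrac1s Tz+(1-\tfrac1s)q$, so that $S(t)z\le\tfrac1s S(t)(Tz)+(1-\tfrac1s)S(t)q$, whose norm is at most $\tfrac1s N+(1-\tfrac1s)M(q)$ uniformly in $t$ (using $M(q)<\infty$). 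A matching lower bound follows by reflecting through $x_0$, since $x_0=\tfrac12 z+\tfrac12(2x_0-z)$ yields $S(t)z\ge 2S(t)x_0-S(t)(2x_0-z)$, again bounded uniformly using $M(x_0)<\infty$ and the upper bound for $2x_0-z\in B(x_0,r)$. Because $0\le|c|\le|a|+|b|$ whenever $a\le c\le b$ in a Banach lattice, these two order bounds combine to the desired $C^\ast<\infty$ on $B(x_0,r)$.

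Finally I would run the convexity estimate on $B(x_0,r/4)$. For distinct $y,z\in B(x_0,r/4)$ the point $w:=z+\tfrac r2\tfrac{y-z}{\|y-z\|}$ lies in $B(x_0,r)$ and $y=(1-\la)z+\la w$ with $\la=\tfrac2r\|y-z\|\le 1$, so convexity gives $S(t)y-S(t)z\le\tfrac2r\|y-z\|\big(S(t)w-S(t)z\big)$; the symmetric choice bounds $S(t)z-S(t)y$ from above as well, and the same lattice inequality turns this order sandwich into $\|S(t)y-S(t)z\|\le\tfrac{8C^\ast}{r}\|y-z\|$, uniformly in $t\in[0,T]$. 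Taking $L:=8C^\ast/r$ and renaming $r/4$ as $r$ completes the argument. The step I expect to be the main obstacle is the passage from the pointwise-in-time bound to a bound uniform over an entire ball: the envelope $x\mapsto\sup_t\|S(t)x\|$ is \emph{not} convex, so this genuinely requires Baire's theorem, and the Baire ball — centred at an uncontrolled point $y_0$ — must then be relocated to the prescribed centre $x_0$, which is exactly where convexity and the Banach lattice order structure enter.
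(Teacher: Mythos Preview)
The paper does not prove this lemma; it is stated in Appendix~A as an auxiliary result and cited verbatim from \cite[Corollary~2.4]{dkn3}. So there is no in-paper proof to compare against.

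Your strategy is correct and is essentially the argument one finds in \cite{dkn3}: establish uniform-in-$t$ boundedness of orbits on some ball via Baire category (closedness of the level sets $H_n$ uses continuity of each $S(t)$, which you correctly invoke through Lemma~\ref{cor:Lip}), then transport the ball to the prescribed centre using convexity and an order sandwich, and finally derive the Lipschitz estimate from convexity again. The details you give check out: the convex combination $z=\tfrac1s Tz+(1-\tfrac1s)q$ is algebraically correct; the lower order bound $S(t)z\ge 2S(t)x_0-S(t)(2x_0-z)$ becomes a genuine lower bound once you feed in the \emph{upper} order estimate already obtained for $2x_0-z\in B(x_0,r)$, so the apparent circularity is resolved; and the lattice inequality $|c|\le|a|+|b|$ for $a\le c\le b$ (which follows from $c^+\le b^+$, $c^-\le a^-$) is what converts the order sandwich into a norm bound. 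The final step with $w=z+\tfrac r2\,\tfrac{y-z}{\|y-z\|}$ and $\lambda=\tfrac2r\|y-z\|$ is the standard convexity-to-Lipschitz estimate, and your constant $8C^\ast/r$ is right. The only cosmetic point: in the first step you should check that the number of iterations of $S(\delta_x)$ stays bounded \emph{and} that the image of the ball $B(0,\|x\|+1)$ under $S(\delta_x)$ is contained in a ball whose radius depends only on $\|S(\delta_x)\|_{\|x\|+1}$, so that the $k$-fold iterate stays bounded; you say this but it is worth making the dependence explicit.
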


\begin{corollary}[{\cite[Corollary 2.5]{dkn3}}]\label{cor:cont}
 The map $[0,\infty)\to X,\; t\mapsto S(t)x$ is continuous for all $x\in X$.
\end{corollary}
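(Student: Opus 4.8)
The plan is to deduce both right- and left-continuity of $t\mapsto S(t)x$ at each point from the single hypothesis of strong continuity at the origin (S3), using the semigroup law to push every time increment back to a neighbourhood of $0$ and the uniform local Lipschitz estimate of Lemma~\ref{loclipschitz} (which in turn rests on the convexity and boundedness of the $S(t)$) to control the resulting expression. Fix $x\in X$ and $T>0$. Applying Lemma~\ref{loclipschitz} with $x_0:=x$, I obtain $L\geq 0$ and $r>0$ such that $\sup_{t\in[0,T]}\|S(t)y-S(t)z\|\leq L\|y-z\|$ for all $y,z\in B(x,r)$, and by (S3) there is some $\delta>0$ with $S(h)x\in B(x,r)$ for all $h\in[0,\delta]$.

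For right-continuity at $t_0\in[0,T)$, the semigroup property gives, for $0\le h\le\min\{\delta,T-t_0\}$,
\[
\|S(t_0+h)x-S(t_0)x\|=\|S(t_0)S(h)x-S(t_0)x\|\le L\|S(h)x-x\|,
\]
which tends to $0$ as $h\downarrow 0$ by (S3); here the Lipschitz estimate applies since $t_0\in[0,T]$ and $S(h)x,x\in B(x,r)$. For left-continuity at $t_0\in(0,T]$, the same factorisation gives, for $0<h\le\min\{\delta,t_0\}$,
\[
\|S(t_0)x-S(t_0-h)x\|=\|S(t_0-h)S(h)x-S(t_0-h)x\|\le L\|S(h)x-x\|\to 0,
\]
where now $t_0-h\in[0,T]$, so again the uniform bound of Lemma~\ref{loclipschitz} applies.

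Combining the two one-sided limits yields continuity at every $t_0\in[0,T]$, and since $T>0$ is arbitrary the map $t\mapsto S(t)x$ is continuous on $[0,\infty)$. The only point requiring care is that (S3) furnishes strong continuity \emph{only} at $0$; the argument therefore hinges on using the semigroup identity to reduce every increment to one of the form $S(\tau)S(h)x-S(\tau)x$ with $\tau\in[0,T]$, together with the fact that the Lipschitz constant $L$ in Lemma~\ref{loclipschitz} is \emph{uniform} in $\tau\in[0,T]$. This uniformity is exactly what allows the varying operators $S(t_0-h)$ appearing in the left-continuity step to be handled simultaneously, so no difficulty beyond this bookkeeping arises.
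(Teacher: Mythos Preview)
Your proof is correct. The paper itself does not supply a proof of this corollary---it is quoted from \cite[Corollary~2.5]{dkn3}---but your argument is precisely the natural one: reduce an arbitrary increment to one of the form $S(\tau)S(h)x-S(\tau)x$ via the semigroup law, and then invoke the uniform-in-$\tau$ local Lipschitz bound of Lemma~\ref{loclipschitz} together with (S3). This is the standard route and almost certainly coincides with the proof in the cited reference.
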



\begin{proposition}[{\cite[Proposition 2.7]{dkn3}}]\label{domainlip}
Let $x\in X$ with
\[
\sup_{h\in (0,h_0]}\bigg\|\frac{S(h)x-x}{h}\bigg\|<\infty\quad  \text{for some }h_0>0.
\]
Then, the map $[0,\infty)\to X$, $t\mapsto S(t)x$ is locally Lipschitz continuous, i.e., for every $T>0$, there exists some $L_T\geq 0$ such that $\|S(t)x-S(s)x\|\leq L_T|t-s|$ for all $s,t\in [0,T]$.
\end{proposition}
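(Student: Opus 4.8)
The plan is to bootstrap from the assumed one-sided bound on the difference quotient at $t=0$ to local Lipschitz continuity in $t$, using the semigroup law $S(t+\delta)x=S(t)\big(S(\delta)x\big)$ together with the uniform-on-a-neighbourhood Lipschitz property of the operators $S(t)$. Fix $T>0$. By Lemma~\ref{loclipschitz} (with base point $x$) there are $L\ge 0$ and $r>0$ with $\sup_{t\in[0,T]}\|S(t)y-S(t)z\|\le L\|y-z\|$ for all $y,z\in B(x,r)$. Set $C:=\sup_{h\in(0,h_0]}\big\|\tfrac{S(h)x-x}{h}\big\|<\infty$, so that $\|S(h)x-x\|\le Ch$ for $h\in(0,h_0]$, and choose $\delta_1:=\min\{h_0,\,r/(C+1)\}>0$; then $\|S(\delta)x-x\|\le C\delta<r$, hence $S(\delta)x\in B(x,r)$, for every $\delta\in(0,\delta_1]$.

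With these choices, Lemma~\ref{loclipschitz} applies to the pair $y=S(\delta)x$, $z=x$, and yields, for all $t\in[0,T]$ and $\delta\in(0,\delta_1]$,
\[
\|S(t+\delta)x-S(t)x\| = \big\|S(t)\big(S(\delta)x\big)-S(t)x\big\| \le L\,\|S(\delta)x-x\|\le LC\,\delta.
\]
To pass to arbitrary $s,t\in[0,T]$ with $s<t$, I would subdivide: set $n:=\lceil(t-s)/\delta_1\rceil$ and $\delta:=(t-s)/n\le\delta_1$; telescoping gives $S(t)x-S(s)x=\sum_{k=0}^{n-1}\big(S(s+(k+1)\delta)x-S(s+k\delta)x\big)$, and since each intermediate time $s+k\delta$ lies in $[0,T]$, the previous estimate bounds every summand by $LC\delta$. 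Summing, $\|S(t)x-S(s)x\|\le n\,LC\,\delta=LC\,(t-s)$, so that $L_T:=LC$ does the job.

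I do not expect a genuinely hard step here; the only point requiring care is that the uniform-in-$t$ Lipschitz bound of Lemma~\ref{loclipschitz} is available only on the ball $B(x,r)$, so one must first shrink $\delta$—using precisely the assumed difference-quotient bound, which also gives $S(\delta)x\to x$—to keep $S(\delta)x$ inside that ball before invoking it; the subdivision argument then promotes the resulting small-increment estimate to a Lipschitz bound on all of $[0,T]$. Convexity enters only through Lemma~\ref{loclipschitz} (equivalently Lemma~\ref{cor:Lip}), which is what upgrades mere boundedness of the $S(t)$ to local Lipschitz continuity with a locally uniform constant.
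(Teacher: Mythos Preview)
Your argument is correct. The paper does not give its own proof of this proposition---it is quoted from \cite[Proposition 2.7]{dkn3}---so there is nothing in the present paper to compare against; the approach you take (semigroup law plus the uniform-in-$t$ local Lipschitz estimate of Lemma~\ref{loclipschitz}, then a telescoping subdivision) is the standard one and is exactly what one expects the cited proof to do.
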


	\section{Directional derivatives of convex operators}\label{append:direcder}
	In this section, we provide some results on directional derivatives of convex operators. 
	
	\begin{lemma}\label{lemma:welldef}
		Let $(x_n)_{n\in\mathbb{N}}$ be a sequence in $X$. If
		$(y_n)_{n\in\mathbb{N}}$ and $(z_n)_{n\in\mathbb{N}}$ are decreasing sequences in $X$
		which are bounded from below such that
		$\|x_n-y_n\|\to 0$ and $\|x_n-z_n\|\to 0$, then
		$\inf_{n\in\mathbb{N}} y_n=\inf_{n\in\mathbb{N}} z_n$.
	\end{lemma}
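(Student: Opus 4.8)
The plan is to reduce the statement, which is about order limits in the big space $\overline X$, to an equality of real numbers obtained by testing against the functionals in $M$. First I would set $y:=\inf_{n\in\mathbb N}y_n$ and $z:=\inf_{n\in\mathbb N}z_n$; both exist in $\overline X$ since the sequences are decreasing and bounded from below and $\overline X$ is Dedekind $\sigma$-complete, and because $y_n,z_n\in X$ we have $y_n\downarrow y$ and $z_n\downarrow z$, so in particular $y,z\in X_\delta$. Thus the claim is exactly that $y=z$. By the triangle inequality, $\|y_n-z_n\|\le\|y_n-x_n\|+\|x_n-z_n\|\to 0$. The main obstacle is that the norm is only defined on $X$: the difference $y-z$ need not belong to $X$, so one cannot directly estimate $\|y-z\|$ and pass to the limit. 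The device that circumvents this is precisely the point-separating family $M$ from Section~\ref{sec:setup}.

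So, next I would fix $\mu\in M$. Since $\mu$ is a positive linear functional on the Banach lattice $X$, it is norm-continuous (see e.g.\ \cite[Theorem 9.6]{aliprantis2006infinite}); write $\|\mu\|$ for its operator norm. The unique extension of $\mu$ to $X_\delta$ is continuous from above, and since $y_n\downarrow y$ and $z_n\downarrow z$ with $y_n,z_n\in X$ and $y,z\in X_\delta$, this gives $\mu y_n\downarrow\mu y$ and $\mu z_n\downarrow\mu z$; in particular $\mu y=\lim_{n}\mu y_n$ and $\mu z=\lim_{n}\mu z_n$. Passing to the limit in the estimate $|\mu y_n-\mu z_n|=|\mu(y_n-z_n)|\le\|\mu\|\,\|y_n-z_n\|$ then yields $|\mu y-\mu z|\le\|\mu\|\lim_{n}\|y_n-z_n\|=0$, that is, $\mu y=\mu z$.

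Finally, since $\mu y=\mu z$ holds for every $\mu\in M$, since $y,z\in X_\delta$, and since $M$ separates the points of $X_\delta$ by the standing assumption of Section~\ref{sec:setup}, I conclude $y=z$, i.e.\ $\inf_{n}y_n=\inf_{n}z_n$. The only non-routine ingredient is this passage through $M$; the rest is bookkeeping with order limits and the lattice norm. (One could try a purely lattice-theoretic argument — for instance, reducing to $y\le z_m$ for each $m$ by estimating $(y-z_m)^+\le(y_n-z_n)^+$ for $n\ge m$ — but closing it still requires controlling an element of $\overline X$ that may lie outside $X$, which again forces the use of $M$ or some form of order continuity that the paper deliberately avoids.)
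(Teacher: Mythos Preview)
Your proof is correct and follows essentially the same route as the paper: fix $\mu\in M$, use its norm-continuity on $X$ (same reference) to get $\mu(y_n-z_n)\to 0$, invoke continuity from above of the extension to obtain $\mu y_n\to\mu y$ and $\mu z_n\to\mu z$, and conclude via the point-separating property of $M$ on $X_\delta$. The paper's version is terser but structurally identical.
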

	\begin{proof}
		Fix $\mu\in M$. Since $\mu$ is continuous on $X$, see e.g.~\cite[Theorem 9.6]{aliprantis2006infinite}, one has \[\mu(y_n-z_n)=\mu(y_n-x_n)+\mu(x_n-z_n)\to 0,\] which shows that
		\[
		\mu \Big(\inf_{n\in\mathbb{N}} y_n\Big)=\lim_{n\to\infty}\mu  y_n+\lim_{n\to\infty}\mu(z_n-y_n)=\lim_{n\to\infty}\mu z_n=\mu\Big(\inf_{n\in\mathbb{N}}  z_n\Big).
		\]
		Since $\inf_{n\in\mathbb{N}} y_n,\inf_{n\in\mathbb{N}}  z_n\in X_\delta$ and $M$ separates the points of $X_\delta$, it follows that
		$\inf_{n\in\mathbb{N}} y_n=\inf_{n\in\mathbb{N}}  z_n$.
	\end{proof}

	\begin{lemma}\label{ex:S}
		Let $S\colon X\to X$ be a convex monotone operator which is continuous from above. Then, it has a unique monotone convex extension $S\colon X_\delta\to X_\delta$ which is continuous from above.
	\end{lemma}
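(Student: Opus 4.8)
The plan is to establish the existence and uniqueness of the extension of $S$ to $X_\delta$ by a direct construction, mimicking the way positive linear functionals in $M$ are extended (cf.~\cite[Lemma 3.9]{denk2018kolmogorov}), but now for a nonlinear operator taking values in $X_\delta$ rather than in $\R$. First I would \emph{define} the extension: for $x\in X_\delta$ pick a sequence $(x_n)_{n\in\mathbb{N}}$ in $X$ with $x_n\downarrow x$ and set $\overline S x:=\inf_{n\in\mathbb{N}} S(x_n)$. Here one must first check that this infimum lives in $X_\delta$. By monotonicity of $S$, the sequence $(S(x_n))_{n\in\mathbb{N}}$ is decreasing in $X$; it is bounded below because $x_m\le x_n$ for $m\ge n$ gives, for any fixed $y\in X$ with $y\le x$ (which exists since $x\in X_\delta\subset\overline X$ and some $x_n$ is an upper bound while one can also use that $X$ is a Riesz subspace — more carefully, monotonicity together with continuity from above shows $S(x_n)$ is bounded below by, e.g., $S(x_1)+ \big(S(x_n)-S(x_1)\big)$, and a cleaner route is to note $S$ is Lipschitz on order-bounded sets via Lemma~\ref{cor:Lip}, so $\|S(x_n)-S(x_1)\|\le L\|x_n-x_1\|\le L\|x_1-x_m\|$ stays bounded). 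Hence $S(x_n)\downarrow \overline S x$ for some $\overline S x\in X_\delta$ by definition of $X_\delta$.

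The second step is \emph{well-definedness}: the value $\overline S x$ must not depend on the approximating sequence. Given two sequences $x_n\downarrow x$ and $x_n'\downarrow x$ in $X$, the sequence $x_n\wedge x_n'$ also decreases to $x$ and lies in $X$ (Riesz subspace), and $x_n\wedge x_n' \le x_n$, $x_n\wedge x_n'\le x_n'$. Applying continuity from above of $S$ to the sequences $x_n - (x_n\wedge x_n')\downarrow 0$ and using convexity/monotonicity (concretely, $S(x_n\wedge x_n') \le S(x_n)$ and a sandwiching argument using that $\inf_n S(x_n\wedge x_n')$ agrees with $\inf_n S(x_n)$ because $S(x_n)- S(x_n\wedge x_n')\ge 0$ and $\inf_n\big(S(x_n)-S(x_n\wedge x_n')\big)$ can be controlled) one gets $\inf_n S(x_n) = \inf_n S(x_n\wedge x_n') = \inf_n S(x_n')$. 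The cleanest formulation is: test against $\mu\in M$, use that $\mu$ is continuous from above on $X_\delta$ and that $\mu\circ S$-type estimates give $\mu(\inf_n S(x_n)) = \inf_n \mu S(x_n)$; then since $S(x_n\wedge x_n')\le S(x_n)$ we get one inequality, and by symmetry the infima coincide after applying $\mu$; as $M$ separates points of $X_\delta$ we conclude equality in $X_\delta$. This is essentially Lemma~\ref{lemma:welldef} applied after an extra monotonicity step.

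The third step is to verify that $\overline S$ is \emph{monotone, convex, and continuous from above on $X_\delta$, and extends $S$}. That $\overline S|_X = S$ is immediate by taking the constant sequence. Monotonicity: if $x\le x'$ in $X_\delta$ with $x_n\downarrow x$, $x_n'\downarrow x'$, then $x_n\wedge x_n'\downarrow x$ as well and $x_n\wedge x_n'\le x_n'$, so $\overline S x = \inf_n S(x_n\wedge x_n')\le \inf_n S(x_n')=\overline S x'$. Convexity: for $\lambda\in[0,1]$ and $x,x'\in X_\delta$ with approximating sequences, $\lambda x_n + (1-\lambda)x_n'\downarrow \lambda x + (1-\lambda)x'$ and convexity of $S$ on $X$ gives $S(\lambda x_n+(1-\lambda)x_n')\le \lambda S(x_n)+(1-\lambda)S(x_n')$; taking infima and using that $\inf$ of a sum of decreasing sequences is the sum of the infima in $X_\delta$ (which follows by testing against $\mu\in M$ and separation) yields $\overline S(\lambda x+(1-\lambda)x')\le\lambda\overline S x+(1-\lambda)\overline S x'$. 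Continuity from above: if $z_k\downarrow z$ in $X_\delta$, choose for each $k$ a sequence in $X$ decreasing to $z_k$ and build a diagonal sequence in $X$ decreasing to $z$; then $\overline S z_k\downarrow \overline S z$ follows by a standard diagonal argument combined with continuity from above of $S$. Finally, \emph{uniqueness}: any monotone extension continuous from above must satisfy $\tilde S x = \tilde S(\inf_n x_n) = \inf_n \tilde S x_n = \inf_n S x_n = \overline S x$.

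The main obstacle I anticipate is \textbf{not} any single hard estimate but rather the bookkeeping needed to move infima of decreasing nets through the nonlinear operator $S$ while landing back in $X_\delta$ — in particular proving well-definedness (Step 2) and that $\inf$ commutes with sums and convex combinations. The device that makes all of this go through uniformly is: (a) $M$ separates points of $X_\delta$, and (b) each $\mu\in M$ is continuous from above on $X_\delta$, so every identity among elements of $X_\delta$ can be checked by applying $\mu$ and reducing to the scalar statements, where continuity-from-above of $S$ supplies $\mu S(x_n)\downarrow \mu \overline S x$. Once this reduction principle is set up at the start, each of the remaining verifications is a two-line scalar computation.
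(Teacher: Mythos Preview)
Your proposal is correct and follows essentially the same route as the paper: define the extension by $\overline S x:=\inf_n S(x_n)$ for $x_n\downarrow x$, and verify well-definedness, monotonicity, convexity, continuity from above, and uniqueness by testing against $\mu\in M$ and using that $M$ separates points of $X_\delta$. The paper's proof is more streamlined only in that it first invokes \cite[Lemma~3.9]{denk2018kolmogorov} to extend each scalar functional $\mu S\colon X\to\R$ to $X_\delta$, after which well-definedness and the remaining properties drop out in one line from $\mu(\inf_n Sx_n)=\inf_n\mu Sx_n=(\mu S)(x)$ --- precisely the ``reduction principle'' you isolate at the end of your sketch.
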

	\begin{proof}
		For each $\mu\in M$,
		the  convex monotone functional $\mu S\colon X\to\mathbb{R}$ is continuous from above. Thus, by \cite[Lemma 3.9]{denk2018kolmogorov}, it has a unique extension to a convex monotone functional $\mu S\colon X_\delta\to\mathbb{R}$ which is continuous from above.
		
		Fix $x\in X_\delta$. For $(x_n)_{n\in\mathbb{N}}$ and $(y_n)_{n\in\mathbb{N}}$ in $X$ with $x_n\downarrow x$ and $y_n\downarrow x$, one has
		\[
		\mu\Big(\inf_{n\in\mathbb{N}} Sx_n\Big)=\inf_{n\in\mathbb{N}} \mu S x_n=\mu S\Big(\inf_{n\in\mathbb{N}} x_n\Big)=\mu S\Big(\inf_{n\in\mathbb{N}} y_n\Big)=\inf_{n\in\mathbb{N}} \mu S y_n=\mu\Big(\inf_{n\in\mathbb{N}} Sy_n\Big).
		\]
		Hence, $Sx:=\inf_{n\in\mathbb{N}} Sx_n$ is well defined as $M$ separates the points of $X_\delta$. Then, $S$ is convex and continuous from above as
		\[
		\mu\Big(\inf_{n\in\mathbb{N}} Sx_n\Big)=\inf_{n\in\mathbb{N}}\mu Sx_n=\mu Sx
		\]
		for every $(x_n)_{n\in\mathbb{N}}$ in $X_\delta$ with $x_n\downarrow x\in X_\delta$. Moreover, if $\tilde S$ is another extension which is continuous from above, then
		$\tilde Sx=\lim_{n\to \infty} \tilde Sx_n=\lim_{n\to \infty} Sx_n=S x$ for every $(x_n)_{n\in\mathbb{N}}$ in $X$ with $x_n\downarrow x\in X_\delta$, which shows that such an extension is unique.
	\end{proof}

	Let $S\colon X\to X$ be a convex operator. Then, the function
	\[
	\R\setminus\{0\}\to X, \quad h\mapsto\frac{S(x+h y)-S x}{h}
	\]
	is increasing for all $x,y\in X$. Hence, for all $x\in X$, the operators
	\begin{equation}\label{def:Sprime}
	S^\prime_+(x)y:=\inf_{h> 0} \frac{S(x+h y)-Sx}{h}\quad\mbox{and}\quad
	S^\prime_-(x)y:=\sup_{h< 0} \frac{S(x+h y)-Sx}{h}
	\end{equation}
	for $y\in X$ are well-defined with values in $\bar X$ since
	\[
	S^\prime_+(x)y=\inf_{n\in \N} \frac{S(x+h_n y)-Sx}{h_n}\in X_\delta\quad\mbox{and}\quad
	S^\prime_-(x)y=\sup_{n\in \N} \frac{Sx-S(x-h_n y)}{h_n}\in -X_\delta
	\]
	for every sequence $(h_n)_{n\in\mathbb{N}}$ in $(0,\infty)$ with $h_n\to  0$. The following properties follow directly from the definition.
	
	\begin{remark}
		For every $x,y\in X$, it holds
		\begin{itemize}
			\item[(i)] $S^\prime_-(x)y=-S^\prime_+(x)(-y)$,
			\item[(ii)] $S^\prime_-(x)y\le S^\prime_+(x)y$,
			\item[(iii)] $S^\prime_+(x)y=S^\prime_-(x)y=Sy$, if $S$ is linear.
		\end{itemize}
	\end{remark}
	If $S\colon X\to X$ is a convex monotone operator which is continuous from above, then by Lemma \ref{ex:S}, it has a unique convex monotone extension $S\colon X_\delta\to X_\delta$ which is continuous from above. Therefore, $S(x+hy)\in X_\delta$
	for all $y\in X_\delta$ and $h>0$. Hence, $S^\prime_+(x)$ extends to
	\[
	S^\prime_+(x)\colon X_\delta\to X_\delta,\quad y\mapsto \inf_{h>0} \frac{S(x+h y)-Sx}{h}
	\]
	for all $x\in X$.
	
	\begin{lemma}\label{lem:Sprime0}
		Let $S\colon X\to X$ be a convex monotone operator which is continuous from above. For every $x\in X$, the mapping $S^\prime_+(x)$ has the following properties:
		\begin{itemize}
			\item[(i)] $S^\prime_+(x)y\leq S_x y$ for all $y\in X_\delta$,
			\item[(ii)] $S^\prime_+(x)\colon X_\delta\to X_\delta$ is convex and positive homogeneous,
			\item[(iii)] $S^\prime_+(x)$ is continuous from above,
			\item[(iv)]  $\tfrac{S(x+h_n y_n)-Sx}{h_n}\downarrow S^\prime_+(x)y
			$ for all sequences $(h_n)_{n\in\mathbb{N}}$ in $(0,\infty)$ and $(y_n)_{n\in\mathbb{N}}$ in $X_\delta$ which satisfy  $h_n\downarrow 0$ and $y_n\downarrow y\in X_\delta$.
			
		\end{itemize}	
	\end{lemma}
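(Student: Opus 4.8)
The plan is to deduce all four properties from the defining formula $S'_+(x)y=\inf_{h>0}\frac{S(x+hy)-Sx}{h}$, from the monotonicity of the difference quotient $h\mapsto\frac{S(x+hy)-Sx}{h}$ recorded just before the statement (which extends verbatim to $y\in X_\delta$ using convexity of the $X_\delta$-extension of $S$ from Lemma~\ref{ex:S}), and from the fact that $M$ separates the points of $X_\delta$, so that infima of decreasing sequences in $X_\delta$ may be tested against the functionals $\mu\in M$, each of which is continuous from above on $X_\delta$. Recall that it was already noted before the statement that $S'_+(x)$ takes values in $X_\delta$.

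Part (i) is immediate: inserting $h=1$ into the infimum gives $S'_+(x)y\le S(x+y)-Sx=S_xy$. For (ii), positive homogeneity follows by the substitution $h\mapsto\lambda h$ in the infimum, which commutes with multiplication by $\lambda>0$. For convexity I would start from $x+h(\lambda y_1+(1-\lambda)y_2)=\lambda(x+hy_1)+(1-\lambda)(x+hy_2)$, apply convexity of $S$ on $X_\delta$, subtract $Sx$, divide by $h$, and then let $h\downarrow 0$; since each difference quotient decreases to the corresponding directional derivative, applying an arbitrary $\mu\in M$ and using that $\mu$ is continuous from above on $X_\delta$ preserves the inequality in the limit, and $M$ separating the points of $X_\delta$ yields the claim.

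The core of the proof is (iv), from which (iii) then follows. First I would record a joint monotonicity: if $0<h'\le h$ and $y'\le y$ in $X_\delta$, then $\frac{S(x+h'y')-Sx}{h'}\le\frac{S(x+h'y)-Sx}{h'}\le\frac{S(x+hy)-Sx}{h}$, the first inequality by monotonicity of $S$ and the second by monotonicity in the step size. Applying this with $h'=h_{n+1}\le h_n=h$ and $y'=y_{n+1}\le y_n=y$ shows that $z_n:=\frac{S(x+h_ny_n)-Sx}{h_n}$ is order-decreasing in $X_\delta$. To identify $\inf_n z_n$: on the one hand $z_n\ge\frac{S(x+h_ny)-Sx}{h_n}\ge S'_+(x)y$ since $y\le y_n$, so $\inf_n z_n\ge S'_+(x)y$; on the other hand, fixing $h>0$, the joint monotonicity gives $z_n\le\frac{S(x+hy_n)-Sx}{h}$ for every $n$ with $h_n\le h$, and since $x+hy_n\downarrow x+hy$ in $X_\delta$ and $S$ is continuous from above on $X_\delta$, the right-hand side decreases to $\frac{S(x+hy)-Sx}{h}$ (again passing the infimum through each $\mu\in M$); hence $\inf_n z_n\le\frac{S(x+hy)-Sx}{h}$ for all $h>0$, so $\inf_n z_n\le S'_+(x)y$. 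This gives $z_n\downarrow S'_+(x)y$, which is (iv). For (iii), if $y_n\downarrow y$ in $X_\delta$, choose any $h_n\downarrow 0$; then $S'_+(x)y_n\le z_n$ for each $n$ by the definition of the infimum, so $\inf_n S'_+(x)y_n\le\inf_n z_n=S'_+(x)y$ by (iv), while $S'_+(x)y_n\ge S'_+(x)y$ by monotonicity of $S'_+(x)$ (inherited from $S$), and therefore $S'_+(x)y_n\downarrow S'_+(x)y$.

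The main obstacle I anticipate is (iv): one must check that the combined sequence $z_n$ is genuinely order-decreasing in $X_\delta$ — which is exactly where the joint monotonicity in the pair $(h,y)$ is needed — and then reconcile the two one-sided estimates on $\inf_n z_n$ by first freezing the step size $h$ in the upper bound, exploiting continuity from above of $S$ for that fixed $h$, and only afterwards letting $h\downarrow 0$. Everything else is routine bookkeeping with the separating family $M$.
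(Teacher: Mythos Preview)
Your proposal is correct and follows essentially the same approach as the paper: parts (i) and (ii) are argued identically, and for (iv) both you and the paper exploit the joint monotonicity of the difference quotient in $(h,y)$, first freezing the step size to use continuity from above in $y$ and then taking the infimum over $h$. The only organizational difference is that the paper proves (iii) directly in one line via the interchange $\inf_n S'_+(x)y_n=\inf_{h>0}\inf_n\frac{S(x+hy_n)-Sx}{h}=\inf_{h>0}\frac{S(x+hy)-Sx}{h}$ and then handles (iv) separately, whereas you prove (iv) first and deduce (iii) from it; both routes are valid and rest on the same ingredients.
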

	
	\begin{proof}
		(i) For every $y\in X_\delta$, one has $S^\prime_+(x)y\le S(x+y)-S(x)=S_x(y)$.
		
		(ii) For $\varepsilon>0$, $\mu\in M$, and $\lambda\in[0,1]$, there exists some $h>0$ such that
		\begin{align*}
		&\mu\big(\lambda S^\prime_+(x)y_1 +(1-\lambda) S^\prime_+(x)y_2\big)+\varepsilon\\
		&\qquad\quad \ge  \lambda \frac{\mu S(x+h y_1)-\mu S(x)}{h} + (1-\lambda) \frac{\mu S(x+h y_2)-\mu S(x)}{h}\\
		&\qquad\quad \ge \frac{\mu S\big(x+h(\lambda y_1+(1-\lambda)y_2)\big)-\mu S(x)}{h}
		\ge \mu S^\prime_+(x)\big(\lambda y_1+(1-\lambda)y_2\big).
		\end{align*}
		This shows that $S^\prime_+(x)$ is convex on $X_\delta$. Moreover, for $\la>0$ and $y\in X_\de$, it holds
		\[
		S^\prime_+(x)(\la y)=\inf_{h>0} \frac{S(x+\la h y)-Sx}{h}=\la \inf_{h>0}\bigg(\frac{S(x+\la h y)-Sx}{\la h}\bigg)=\la S^\prime_+(x)y.
		\]
		
		(iii) For every $y_n\downarrow y$,
		\[
		\inf_{n\in\mathbb{N}} S^\prime_+(x)y_n=\inf_{h>0}\inf_{n\in\mathbb{N}} \frac{S(x+h y_n)-S(x)}{h}=\inf_{h>0} \frac{S(x+h y)-S(x)}{h}=S^\prime_+(x)y.
		\]
		
		(iv) Fix $\varepsilon>0$, and $\mu\in M$. By definition of $S^\prime_+$ and continuity from above of $S$, there exist $n_0,m_0\in\mathbb{N}$ such that
		\begin{align*}
		\mu S^\prime_+(x)y+2\varepsilon&\ge\frac{\mu S(x+h_{n_0}y)-\mu Sx}{h_{n_0}}+\varepsilon \ge  \frac{\mu S(x+h_{n_0}y_{m_0})-\mu Sx}{h_{n_0}}\\
		&\ge \frac{\mu S(x+h_{n_1}y_{n_1})-\mu Sx}{h_{n_1}}
		\end{align*}
		for $n_1:= n_0\vee m_0$. This shows that $\frac{S(x+h_n y_n)-Sx}{h_n}\downarrow S^\prime_+(x)y$. The proof is complete.
	\end{proof}	
	
\end{appendix}

%

\end{document}